	\newcounter{arclist}
		\newcounter{arcenum}
		\newenvironment{arcenum}{
			\begin{list}{\arabic{arcenum}.}{
					\usecounter{arcenum}
					\topsep=1pt
					\itemsep=1pt 
					\parsep=0pt 
					\leftmargin=19pt 
				}}
				{\end{list}}
\newcolumntype{C}[1]{>{\centering\let\newline\\\arraybackslash\hspace{0pt}}m{#1}}
\renewcommand\appendix{\par
\setcounter{section}{0}%
\setcounter{subsection}{0}%
\setcounter{table}{0}
\setcounter{table}{0}
\setcounter{figure}{0}
\gdef\thetable{\Alph{table}}
\gdef\thefigure{\Alph{figure}}
\gdef\thesection{\Alph{section}}
\setcounter{section}{0}}
\newcommand{\rev}[1]{\textcolor{black}{#1}}
\newtheorem{theorem}{Theorem}[section]
\newtheorem{remark}{Remark}[section]
\newcommand{\HL}[1]{\textcolor{red}{[#1]\,}}
\newcommand{\parD}[1]{\frac{\partial}{\partial #1}}
\begin{document}


\normalem 



\begin{frontmatter}

\title{Optimal reinsurance design under solvency constraints}

\author[UMelb]{Benjamin Avanzi\corref{cor}}
\ead{b.avanzi@unimelb.edu.au}

\author[]{Hayden Lau}
\ead{kawai.lau90@gmail.com}

\author[Cop]{Mogens Steffensen}
\ead{mogens@math.ku.dk}

\cortext[cor]{Corresponding author.}

\address[UMelb]{Centre for Actuarial Studies, Department of Economics, University of Melbourne VIC 3010, Australia}
\address[Cop]{Department of Mathematical Sciences, University of Copenhagen, DK-2100 Copenhagen, Denmark}

\begin{abstract}

We consider the optimal \rev{risk transfer from an insurance company to a reinsurer}. The problem formulation considered in this paper is closely connected to the optimal portfolio problem in finance, with some crucial distinctions. In particular, the insurance company's surplus is here (as is routinely the case) approximated by a Brownian motion, as opposed to the geometric Brownian motion used to model assets in finance. Furthermore, risk exposure is dialled ``down'' via reinsurance, rather than ``up'' via risky investments. This leads to interesting qualitative differences in the optimal designs.

In this paper, using the martingale method, we derive the optimal design as a function of proportional, non-cheap reinsurance design that maximises the quadratic utility of the terminal value of the insurance surplus. We also consider several realistic constraints on the terminal value: a strict lower boundary, the probability (Value at Risk) constraint, and the expected shortfall (conditional Value at Risk) constraints under the $\mathbb{P}$ and $\mathbb{Q}$ measures, respectively. In all cases, the optimal reinsurance designs boil down to a combination of proportional protection and option-like protection (stop-loss) of the residual proportion with various deductibles. Proportions and deductibles are set such that the initial capital is fully allocated. Comparison of the optimal designs with the optimal portfolios in finance is particularly interesting. Results are illustrated.

\end{abstract}

\begin{keyword}
Reinsurance  \sep Quadratic utility \sep Terminal value constraints \sep Martingale method \sep Payoff function

JEL codes: 
C44 \sep 
C61 \sep 
G32 

MSC classes: 
93E20 \sep 
91G70 \sep 	
62P05 \sep 	
91B30 



\end{keyword}

\end{frontmatter}

\numberwithin{equation}{section}

\section{Introduction}

\subsection{Background and motivation}

The choice of reinsurance cover is a perennial problem in insurance \citep*[see, e.g.][]{AlBeTe17}. Not only have insurers always sought to control their risk level through purchasing well-designed reinsurance contracts, but this is now typically mandated via setting and implementing a chosen \emph{risk appetite}. 

In insurance, early optimality results about the choice of reinsurance include the optimality of stop-loss reinsurance when variance is minimised, and premiums are calculated using an expected value principle \citep*[see, for instance, the seminal work of][]{Arr63}. More recently, \citet*{BaBaHe09,BaBaHe11} considered the stability of optimal reinsurance designs for a range of risk measures, in a discrete probability space. \citet*{CaCh20} provide a recent review of optimal reinsurance designs based on risk measures and comment on the close connection between the insurance and finance fields. 

Indeed, the optimal reinsurance problem is closely connected to the optimal portfolio problem in finance (with some important distinctions discussed below). This connection appears often naturally. The immunisation theory, often credited to British actuary \citet*{Red52}, is a famous example. A recent example of straddling financial and actuarial mathematics in a reinsurance context includes \citet*{BaBaBa21}, who consider the omega ratios in financial and actuarial applications. Reinsurance is optimised with respect to the ceding company’s omega ratio.

The optimal portfolio literature in finance is, of course, also very developed. The so-called \emph{Merton's portfolio problem} \citep*{Mer69} is a classical topic, which keeps being revisited and extended. This problem typically includes maximising the utility of a terminal value and consumption in-between; see \citet*{Kor97} for a comprehensive treatment of its classical formulation.

A major distinction between the insurance and finance optimal control literature is in the nature of the underlying stochastic process. While finance models typically consider multiplicative models such as the geometric Brownian motion, insurance (surplus) models take an absolute approach. The classical insurance model---the Cram\'er-Lundberg model \citep*{Cra55,Lun09}---involves deterministic income and compound Poisson claim payments, but is sometimes approximated by a (non-geometric) diffusion process \citep*[see, for instance,][and references therein]{Bau04, Bau04b,Sch08,OkSu10,TaWeWeZh20,BaZhXiGaZh22}.

Another major distinction is in the nature of the criteria used to design contracts. In ``traditional'' actuarial mathematics, \emph{stability criteria} \citep{Buh70,Ger72} are used to control the surplus processes to make decisions about risk and the design of reinsurance; these include the probability of ruin \citep{Cra55,Lun09,AsAl10}, and the expected present value of dividends \citep{deF57,Ava09,AlTh09,GuXuZh22}. In this paper, we use a different objective and will borrow the linear-quadratic criterion of finance to formulate our objective function.

The problem we consider in this paper is directly related to the ``optimal risk sharing'' problem, which would typically be formulated in a slightly broader way. We chose to focus on the reinsurance interpretation, as it brings context to our solvency constraints. Nevertheless, our results also contribute to the optimal risk-sharing literature. Specifically, they provide insights into the optimal design of the risk-sharing arrangement, as discussed in the next section.

\begin{remark}
The recent and very interesting area of research of optimal reinsurance and investment, such as considered in \citet*{AsBo18,BaZhXiGaZh22} considers a connected, but different problem from the one considered in this paper. These papers take a game theoretic approach to the optimal reinsurance problem. For instance, the reinsurer is a Stackelberg game leader in \citet*{BaZhXiGaZh22} with two insurer followers. Reinsurers and insurers optimise the expected utility of terminal wealth with delay (and relative performance for the insurers).

\rev{In this paper, the reinsurer sets its premium rate exogenously, and the insurer is optimising some objective for given such rate. Extending the paper along the lines of the literature mentioned above, where the reinsurer could actively control its premium rate, could be possible and would be quite interesting.} 
\end{remark}

\subsection{Statement of contributions} \label{S_Contr}

In this paper, we focus on modifying a well-studied problem in finance to derive optimal reinsurance designs under similar constraints but in an insurance context. In the finance literature, constrained portfolio problems with utility functions often consider the underlying wealth process as a geometric Brownian motion because asset prices cannot become negative \citep*[in papers such as][bankruptcy is defined as a portfolio value of 0, which is only achieved via excess consumption]{KaLeSeSh86}. 

We model the surplus of an insurance company---which can become negative---as a Brownian motion and explore the impacts of a range of constraints on the terminal surplus: (i) a strict lower bound, (ii) Value at Risk, and (iii) Expected Shortfall (under both the $\mathbb{P}$ and $\mathbb{Q}$ measures) constraints. These are well-founded in regulatory requirements for insurance entities, and they make particular sense in the insurance context where the wealth process can become negative. This matters because solvency impacts insurance demand and also optimal reinsurance designs \citep*[see, e.g.][]{EcGa18,ReScSc21}. It is also a major consideration for Enterprise Risk Management and in the regulation of financial institutions (e.g. Solvency II in Europe).

We control the insurance company's surplus via the design of a reinsurance contract formulated in terms of proportional reinsurance. This may at first glance seem like a strong limitation, but it actually is not because we model the surplus as a Brownian motion. While the Brownian motion can be considered an approximation to an underlying classical risk process (e.g. a compound Poisson process), under such a continuous approximation, differences in reinsurance design over an infinitesimal amount of time (which is used to specify the process dynamics) vanish, because any risk transfer would be able to be formulated as a proportion over $dt$. However, we can derive and draw insights from this proportion's optimal ``shape'' (as a function of the information available at that time). Thus, the proportional reinsurance specification is not a restriction. 

Notwithstanding the discussion in the previous paragraph, it arguably is a restriction to work with an approximating process. It is beyond the scope of this presentation to consider the optimal design and objective in the case of an underlying non-diffusive risk process. There, we believe, results may look different.

From a mathematical point of view, we derive the terminal wealth structure that maximises its quadratic utility, subject to the solvency constraints mentioned above. This is in contrast with the traditional insurance-related literature, which uses different criteria \citep[see, e.g.][]{AsAl10,AlTh09,CaLaLi20}. Furthermore, while the insurance literature often  considers objectives formulated on an infinite time horizon \citep[but not always; see][for recent counter examples]{ZhMeZe16,GuXuZh22}, it is really for mathematical convenience, as in reality companies control their risk over a finite time outlook. This would typically be at least one year (for regulation and solvency purposes), but more importantly for strategic (management) decisions 5 to 10 years. For such a time horizon a proportional reinsurance contract may not be optimal. As discussed above, it is relevant to generalize to a more optimal dynamic design with proportional reinsurance as the basis.

Optimality is shown using martingale techniques \citep*[see, e.g.][]{Kor97,BaSh01,GuStZh21}. The optimal terminal wealth design structure is generally a combination of stop-loss (with or without limit) and proportional \emph{protection} on the terminal wealth. The former is operationalised via options in finance, a well-known feature of the Merton problem with constraints.

While strict constraints were considered in the insurance context, a Value at Risk constraint (which sets a maximum probability of failing to achieve a certain level) is only considered for level 0; see \citet*{KoWi08}. Expected Shortfall constraints under the $\mathbb{P}$ and $\mathbb{Q}$ measures were considered in the finance context \citep*[see][for the most recent results]{GuStZh21}, although not in the insurance context (with an additive underlying process).

Furthermore, the finance literature commonly works with power utility functions. This does not work in our context since the surplus process can become negative. We choose here to work with quadratic utility instead (closely connected to the mean-variance formulation; see Section \ref{S_ObjectiveFunction}). In actuarial science, the so-called \emph{linear-quadratic} objective function has appeared in the literature specifically on risk management of pension funds, see \citet*{Ste06} and references therein. However, it has not gained much attention as an objective in controlling the classical surplus process of insurance. A critical connection between classical (concave) utility optimization and linear-quadratic problems is that the linear-quadratic objective can be thought of as a second-order polynomial expansion to an ordinary utility objective. Varying over the centre point of the expansion and expansions to higher orders than two has recently been studied by \citet*{NoSt17} and \citet*{FaSu20}.

Our results can be directly compared with those in finance. They differ in several ways, which is remarkable. In finance, the $\mathbb{Q}$ measure version of the expected shortfall problem is easier to implement because it only involves vanilla options. In contrast, the $\mathbb{P}$ design involves power options \citep*{BaSh01,GuStZh21}. At first glance the problem where the expected loss is measured under the $\mathbb{P}-$expectation seems easier to communicate, but \citet*{GuStZh21} discuss the merits of the problem where the expected loss is measured under the $\mathbb{Q}-$expectation problem from a communication perspective. In this paper, in the insurance context, we show that the solution to the $\mathbb{P}-$expectation problem is the slightly simpler one to implement and that neither the solution to the $\mathbb{P}-$expectation problem nor the solution to the $\mathbb{Q}-$expectation problem involves power option elements (recall that the put option in finance can be seen as a stop loss protection in insurance).

Optimal designs are provided in the form of optimal terminal wealth, which is a direct result of the optimisation methodology we are using; see Section \ref{S_meth}. From these, the optimal reinsurance designs can be directly obtained; see Section \ref{S_OR_prel}, Appendix \ref{ss.comp.pit} and in particular Equation \eqref{sol.pit}.  In all cases, the optimal reinsurance designs boil down to a combination of proportional protection and option-like protection (stop-loss) of the residual proportion with various deductibles. Proportions and deductibles are set such that the initial capital is fully allocated. These can easily be depicted; see Figures \ref{F_Strict}-\ref{F_optishapes} and Remark \ref{R_interpret}.

\begin{remark}
While it is possible to write closed-form mathematical expressions for optimal reinsurance proportions over time, these are not very tidy and hard to interpret, which is why we chose not to include them in the body of the paper.  They also aren't our main focus, as we are more interested in the corresponding optimal reinsurance design as  explained above. Nevertheless, these are implemented in R, with code freely available on GitHub; see Section \ref{S_NI}.
\end{remark}

\subsection{Structure of the paper}

In the next section, we introduce our mathematical framework. We define the insurance surplus dynamics considered in this paper and its admissible reinsurance designs. The objective to maximise, and the possible solvency constraints the problem is subject to, are also formally defined. Section \ref{S_OR} provides the designs (and proofs) to all of our sub-problems, after some methodological preliminaries and explanations. Those designs are illustrated in Section \ref{S_NI}, where optimal design parameters are computed for given constellations of parameters. We draw the corresponding optimal terminal wealth functions and perform simulations to show traces of the optimally controlled surplus process and the associated optimal reinsurance design. Section \ref{S_C} concludes.

\section{Model framework} \label{S_MF}

This section introduces our model framework: risk process, design and objective function. All the solvency constraints considered in the paper are also introduced here.

\subsection{Insurance surplus dynamics and reinsurance design} \label{S_MF_surplus}

We introduce a probability space $(\Omega,\mathscr{F}_0,\mathbb{P})$. We consider the diffusion approximation to be a risk process with dynamics in the form
\begin{equation}
d\widetilde{X}_t=adt+\sigma dW_t,
    \quad \widetilde{X}_0=x<\widetilde{k},
\end{equation}
where $W$ is a standard Brownian motion on the probability space. The attribution of a `tilde' to $X$ is introduced here for notational efficiency as we later define an auxiliary process $X$ from $\tilde{X}$ with a shifted drift; see Section \ref{S_OR_prel}. All problems are represented, and all designs (first) are specified through that auxiliary process without a tilde (without loss of generality).

It is well-known that a compensated compound Poisson claims process becomes a Brownian motion for sufficiently frequent and small claims. Hence, the diffusion approximation is quite standard \citep*[see, for instance,][and references therein, for a proof of the result, examples of applications, and associated discussions]{Bau04,Bau04b,Sch08,OkSu10,TaWeWeZh20,BaZhXiGaZh22}. It should be noted, though, that our results below are derived here exclusively under that approximation, and we can not claim that the structure of the results holds in more generality.

Now, we assume that the risk manager can sell off a proportion of the business through proportional reinsurance. The reinsurance is non-cheap, meaning that the reinsurer demands a risk premium larger than the one demanded first by the insurer. Formally, the reinsurer demands a drift $b$, $b>a$, for taking over the full risk. If we denote by $\pi_t$ the proportion of risk covered by the reinsurer at time $t$ in a specific reinsurance product design, the dynamics of the controlled risk process are then given by
\begin{equation} \label{E_controlledXtilde}
    d\widetilde{X}_t^\pi=(a-b\pi_t)dt+(1-\pi_t)\sigma dW_t,
    \quad \widetilde{X}_0^\pi=x<\widetilde{k}.
\end{equation}
Note that if the insurer sells off all risk, i.e. $\pi_t\equiv 1$, the dynamics become
\begin{equation} \label{E_fullreins}
    d\widetilde{X}_t^1=(a-b)dt,
    \quad \widetilde{X}_0^1=x<\widetilde{k},
\end{equation}
and the insurer loses money with certainty because of the non-cheap full reinsurance protection.
As mentioned in the introduction, we emphasise that it is not a restriction to work with proportional reinsurance as a basis for the design as long as it is applied directly to the diffusion approximation. Such a design translates back to any reinsurance design of a potential underlying non-diffusive process. However, we cannot guarantee that the optimal design we find for the diffusion approximation is also optimal for the underlying non-diffusive state process.

\begin{remark}
\rev{The combination of the diffusion approximation and the reinsurance pricing principle expressed through the premium rate $b$ needs a further remark. Analogously to the fact that our result is robust with respect to the reinsurance design after implementing the diffusion approximation, it is also robust to the reinsurance premium principle across all linear combinations of the expectation and variance premium principles. However, it is not robust towards more general premium principles than those as they would introduce non-linearity of $\pi$ in the drift term of $X$ after reinsurance. However, this is crucially post-approximation robustness and relies on dynamic control. As mentioned above, the optimal control we find may not be optimal in the underlying non-diffusive state process, and optimality may be lost independently of the reinsurance premium principle applied.}
\end{remark}

\subsection{Objective function} \label{S_ObjectiveFunction}

We are interested in studying an objective in quadratic form, i.e. the objective is to minimize the quadratic distance to a certain surplus target at a given time $T$. Thus, we search for
\begin{equation}\label{E_objective}
    \max_{\pi\in\Pi}\mathbb{E}\left[-\frac{1}{2}\left(\widetilde{k}-\widetilde{X}_T^\pi\right)^2\right],
\end{equation}
and the corresponding optimizing design, where $\Pi$  is the set of admissible designs (defined shortly). Let $$\mathscr{F}=\{\mathscr{F}_t=\sigma(W_s,s\in[0,t]),t\geq 0\}$$
be the filtration generated by $W$, and define $\Pi$ by
$$\Pi:=\{\pi:\text{adapted to }\mathscr{F},\forall \omega\in\Omega\backslash N,~ \exists B_{\pi,\omega}>0 \text{ such that }|\pi_t(\omega)|<B_{\pi,\omega},t\in[0,T]\},$$
where $N$ is a $\mathbb{P}$-null set. All our constructed designs are admissible; see \eqref{sol.pit}.

Without further constraints, it may well be that $\pi_t$ is outside the interval $[0,1]$. This means that the insurer can, in principle, act as a reinsurer himself and take over more of the risk than he originally had $(\pi_t<0)$ or pass on more risk than he had, i.e. go short in insurance risk $(\pi_t>1)$. We allow these positions below, disregarding classical financial borrowing and short-selling constraints. Considering optimization under such constraints is beyond the scope of this paper. That being said, from \eqref{E_objective} and \eqref{E_controlledXtilde}, intuitively $\pi_t>1$ is never optimal; indeed, this is observed in our numerical illustrations in Section \ref{S_NI}.

Varying here over $\widetilde{k}$ in \eqref{E_objective} is equivalent to varying over the mean condition in a mean-variance problem of the form
\begin{equation}\label{E_MV}
    \min_{\pi\in\Pi, \mathbb{E}[\widetilde{X}_T^\pi]=p}\text{Var}[\widetilde{X}_T^\pi] \text{ for some } p\in\mathbb{R};
\end{equation}
a well known objective in the finance literature \citep*[see, e.g.,][]{Kor97,Kor05}. Note that this version of a mean-variance objective is based on the so-called pre-commitment approach to a problem with time-consistency issues. For an equilibrium approach to optimal reinsurance under the mean-variance criterion but without solvency constraints, see \citet{LiLiYo17}.

\begin{remark} \label{R_MV}
The following point is crucial when thinking about \eqref{E_objective} as a mean-variance formulation. When forming a dynamic design problem below, we work with the value function
\begin{equation}
    \max_{\pi\in\Pi}\mathbb{E}_{t,x}\left[-\frac{1}{2}\left(\widetilde{k}-\widetilde{X}_T^\pi\right)^2\right],
\end{equation}
where $\mathbb{E}_{t,x}$ denotes the conditional expectation $\mathbb{E}\left[\cdot \middle| X(t)=x\right]$. In the well-known mean-variance framework, the mean condition is \emph{unconditional}. Our context can only be interpreted as a mean-variance problem from time 0, not at subsequent time points. Said differently, we are in the traditional mean-variance case with pre-commitment (to the mean, around which quadratic distance is measured) and not in the modern version where the variance is the conditional variance. In the modern version, time-inconsistency issues arise such that traditional global optimization of the design cannot be dealt with by dynamic programming but has to be replaced by a different notion of optimality, e.g. the one introduced in the equilibrium approach.
\end{remark}

\begin{remark}
The finance literature typically works with the power utility. Unfortunately, this does not work with non-geometric processes because power utility assigns minus infinity utility to negative surplus values. This problem can only be avoided in the presence of strict constraints; it remains for all other constraints considered in this paper.

One could consider exponential utility as an alternative to quadratic utility, but we decided to work with a quadratic formulation, mainly due to its nice interpretation; see Equation \eqref{E_MV}.
\end{remark}

\begin{remark}
Note the assumption $\widetilde{X}_t^\pi\leq \widetilde{k}$ in \eqref{E_controlledXtilde} is reasonable because once $\widetilde{X}_t^\pi= \widetilde{k}$, the optimal design is to take $\pi_s=1$ for $s\in[t,T]$ because of \eqref{E_objective}. In this sense, we see that there are two important levels: $x=\widetilde{k}$ (an upper bound) and $x=\widetilde{C}$ (a lower solvency threshold which may be strict or not).
\end{remark}

\subsection{Overview of the solvency constraints}

Considering a range of solvency constraints is a major contribution of this paper, which makes particular sense in the insurance context, as explained below. We consider five cases:
\begin{arcenum}
\item No constraint, where \eqref{E_objective} is maximised without any constraint on its terminal value (either strict or in a probabilistic sense).
\item Strict constraint, where we must achieve
\begin{equation}\label{E_strictconstr}
\widetilde{X}_T^\pi \ge \widetilde{C}
\end{equation}
with certainty. Here, $\widetilde{C}$ could be considered an absolute minimum solvency level.
\item A relaxation of \eqref{E_strictconstr}, where the inequality must be achieved only with given probability $1-\epsilon$, that is,
\begin{equation} \label{E_probconstr}
\mathbb{P}\left[\widetilde{X}_T^\pi \ge \widetilde{C}\right]\ge 1-\epsilon.    
\end{equation}
Note that this corresponds to a \emph{Value at Risk} type of constraint. 
Of course, we have that \eqref{E_probconstr} becomes \eqref{E_strictconstr} as $\epsilon \rightarrow 0$.
\item Expected shortfall under $\mathbb{P}$, that is,
\begin{equation} \label{E_ESconstrP}
\mathbb{E}\left[(\widetilde{C}-\widetilde{X}_T^\pi)_+ \right]\le \nu.
\end{equation}
This sets an upper bound to the expected excess loss above a certain threshold $\widetilde{C}$. If this threshold is set as a Value at Risk, this constraint is the same as the so-called \emph{conditional Value at Risk} or \emph{Tail Value at Risk}. Such a constraint was shown to possess better properties than the straight Value at Risk \citep[also in finance, see for instance][]{BaSh01}, and superseded the Value at Risk in several regulatory frameworks, such as in Basel III (and subsequent standards).
Of course, we have that \eqref{E_ESconstrP} becomes \eqref{E_strictconstr} as $\nu\rightarrow 0$.
\item Expected shortfall under $\mathbb{Q}$, which is the risk-neutral measure used to present financial values as expectations, defined formally in Section \ref{S_ESQ} (as additional notation, which we have not defined yet, is required). Thus, the shortfall is measured analogously to \eqref{E_ESconstrP} with the crucial difference that the expectation is based on the risk-neutral probabilities. The study of this constraint is motivated by \citet*[in the finance context]{GuStZh21}; see also Section \ref{S_Contr}. It makes sense from a risk management perspective, as it sets an upper threshold to the financial value (price) of the (unbought) protection on the expected shortfall of $\widetilde{X}_t^\pi$ from $\widetilde{C}$ (up to a risk-free discount factor).
We also have here that \eqref{E_ESconstrP} becomes \eqref{E_strictconstr} as $\nu\rightarrow 0$.

\end{arcenum}
Note that the closest actuarial literature that considers such constraints is \citet{KoWi08}. Still, they do not consider items 4 and 5 and only a limited version of item 3 (restricted to $\widetilde{C}\equiv 0$).

\section{Optimality designs} \label{S_OR}

\subsection{Preliminaries} \label{S_OR_prel}

To neutralize the cost of the reinsurance and for convenience, we introduce a set of auxiliary processes and constants. From \eqref{E_fullreins}, we define the auxiliary wealth process
\begin{equation}
    X_t^\pi:=\widetilde{X}_t^\pi-(a-b)t,
\end{equation}
as well as constants,
\begin{align}
    k:=\widetilde{k}-(a-b)T,\\
    C:=\widetilde{C}-(a-b)T.
\end{align}
It is easy to see that we can now replace the risk process and constants $(\widetilde{X},\widetilde{k},\widetilde{C})$ by the auxiliary counterparts $(X,k,C)$, bearing in mind that, in the end, we need to adjust the designs to return to the original versions. Thus, we consider an (equivalent) optimization problem, 
\begin{equation}\label{Problem.U}
    \max_{\pi\in\Pi}\mathbb{E}_{t,x}\left[-\frac{1}{2}\left(k-X_T^\pi\right)^2\right]
\end{equation}
with
\begin{equation}
    dX_t^\pi=(1-\pi_t)(bdt +\sigma dW_t),
    \quad X(0)=x<\widetilde{k}.
\end{equation}
In other words, we now focus purely on the residual risk dynamics left after reinsurance: $dX_t^\pi$ describes the profit generated by retaining business $(1-\pi_t)$ at time $t$, as compared to \emph{full} reinsurance protection (which is loss-making since $b>a$).

We now introduce a second auxiliary process. In the design below, we are making use of a special process $Z=\{Z_t,t\in[0,T]\}$ with dynamics
\begin{equation}
    dZ_t=\beta Z_t dW_t,\quad
    Z_0=1, \quad
    \beta=-\frac{b}{\sigma};
\end{equation}
and solution
\begin{equation}
    Z_t=e^{-\frac{1}{2}\beta^2t+\beta W_t}.
\end{equation}
Note that the highest possible upside from risk $W \rightarrow \infty$ corresponds to $Z\rightarrow 0$ because $\beta<0$. Furthermore, it is easy to see that $XZ$ forms a martingale such that, for a given terminal surplus, the path $X_t^\pi$ can be represented by
\begin{equation}\label{find.sample.path}
    X_t^\pi=\mathbb{E}\left[\left.\frac{Z_T}{Z_t}X_T^\pi\right|\mathscr{F}_t\right].
\end{equation}
Note that \eqref{find.sample.path} is not a definition of $X_t$ but a convenient representation relying on former definitions. Thus, once we know $X_T^\pi$, we can evaluate this expression to calculate $X_t$ and thereafter  apply \rev{It\=o's Lemma} to deduce the optimal proportion $\pi_t$; see also Appendix \ref{ss.comp.pit}.

\subsection{Methodology} \label{S_meth}

There are typically two alternative approaches to optimisation in this context \citep{Kor97}: (i) the martingale method \citep*[e.g.,][]{GuStZh21}, and (ii) the dynamic programming approach \citep[e.g.,][]{KrSt13}. We use the former, which involves the following steps:
\begin{arcenum}
\item Postulate an optimal terminal wealth design form;
\item Verify the existence of optimal designs for that form;
\item Verify that the candidate form is indeed optimal.
\end{arcenum}
We follow those steps, the last of which is generally performed via path-wise optimisation.

\subsection{Unconstrained problem} \label{sec.U}

We start out by considering the problem of maximizing utility without any constraints, i.e. we consider the problem
\begin{equation}\label{Problem.U}
    \max_{\pi\in\Pi}\mathbb{E}\left[-\frac{1}{2}(k-X_T^\pi)^2\right],
\end{equation}
with
\begin{equation}\label{def.dxpt}
    dX_t^\pi=(1-\pi_t)(bdt +\sigma dW_t),
    \quad X(0)=x<k.
\end{equation}

\subsubsection{Candidate}

We define the family of reinsurance designs implicitly given by the terminal wealth,
\begin{equation}\label{st.U}
    X_T^{\pi_\lambda}:=k-\lambda Z_T,
\end{equation}
characterised by the parameter $\lambda>0$. Denote also its notation $\Pi_U$, i.e.
\begin{equation}\label{PiU}
    \Pi_U = \{\pi_\lambda: \lambda > 0\}
\end{equation}
and we say a design $\pi$ is of class $\Pi_U$ if $\pi\in\Pi_U$.

\subsubsection{Existence}

We want to show the following:
\begin{theorem}\label{thm.existence.U}
There exists a $\lambda > 0$ such that 
$\mathbb{E}[Z_TX_T^{\pi_\lambda}]=x$.
\end{theorem}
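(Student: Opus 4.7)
The plan is to solve the equation explicitly. Since the candidate terminal wealth in \eqref{st.U} depends linearly on $\lambda$, the budget-type equation $\mathbb{E}[Z_T X_T^{\pi_\lambda}] = x$ becomes
\begin{equation*}
    k\,\mathbb{E}[Z_T] - \lambda\,\mathbb{E}[Z_T^2] = x,
\end{equation*}
which is linear in $\lambda$ and therefore admits at most one solution. The task reduces to (i) computing the two moments of $Z_T$ in closed form and (ii) checking that the resulting value of $\lambda$ is strictly positive.

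For step (i), the process $Z$ has dynamics $dZ_t = \beta Z_t\, dW_t$ with $Z_0 = 1$ and is therefore a positive martingale, so $\mathbb{E}[Z_T] = 1$. From the explicit expression $Z_T = \exp\bigl(-\tfrac{1}{2}\beta^2 T + \beta W_T\bigr)$, the moment generating function of the Gaussian $W_T \sim \mathcal{N}(0,T)$ gives
\begin{equation*}
    \mathbb{E}[Z_T^2] = e^{-\beta^2 T}\,\mathbb{E}[e^{2\beta W_T}] = e^{-\beta^2 T}\,e^{2\beta^2 T} = e^{\beta^2 T}.
\end{equation*}

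Substituting these into the linear equation yields the unique candidate
\begin{equation*}
    \lambda = (k - x)\,e^{-\beta^2 T}.
\end{equation*}
For step (ii), the standing assumption $x < k$ (imposed in \eqref{def.dxpt} and throughout the preliminaries) forces $k - x > 0$, hence $\lambda > 0$, which proves existence. There is no genuine obstacle in this argument: the only subtle point is recognising that the hypothesis $x < \widetilde{k}$ from the model setup carries over to the auxiliary variables as $x < k$ after neutralising the reinsurance drift in Section \ref{S_OR_prel}, and it is precisely this inequality that delivers positivity of $\lambda$.
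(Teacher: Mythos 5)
Your proof is correct, and it takes a more explicit route than the paper's. The paper argues by an intermediate-value-type argument: it defines $V_U(\lambda):=\mathbb{E}[Z_TX_T^{\pi_\lambda}]$, notes continuity and strict monotonicity in $\lambda$ (since $Z_T>0$), and pins down existence from the boundary behaviour $V_U(0)=k>x$ and $V_U(\lambda)\to-\infty$ as $\lambda\to\infty$. You instead exploit the linearity of $X_T^{\pi_\lambda}$ in $\lambda$ to reduce the budget equation to $k-\lambda e^{\beta^2 T}=x$ (using $\mathbb{E}[Z_T]=1$ and $\mathbb{E}[Z_T^2]=e^{\beta^2 T}$, both computed correctly from the lognormal law of $Z_T$), obtaining the closed form $\lambda^*_U=(k-x)e^{-\beta^2 T}$; positivity then follows from the standing assumption $x<k$, which you rightly identify as the essential hypothesis. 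Your approach buys more: it delivers uniqueness and the explicit value of $\lambda^*_U$ (which indeed reproduces the paper's numerical value $1.888951$ for the parameters of Section \ref{S_NI}). The paper's softer argument buys uniformity: the same continuity-plus-monotonicity-plus-limits template is reused verbatim for the constrained problems (Theorems \ref{thm.existence.C}, \ref{thm.existence.P}, \ref{thm.existence.E}, \ref{thm.existence.Q}), where the terminal wealth is only piecewise linear in the parameters and no closed form is available, so presenting the unconstrained case in that style sets up the later proofs. Both arguments are complete; neither has a gap.
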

\begin{proof}\label{pf.existence.U}
Define the value
\begin{equation}\label{VU}
   V_U(\lambda):=\mathbb{E}[Z_TX_T^{\pi_\lambda}].
\end{equation}
 It is easy to see that $V_U$ is continuous with respect to its arguments (e.g. by direct computation via \eqref{app.help}).
 We see immediately from the definition $X_T^{\pi_\lambda}$ that, since $Z_T>0$, we have that
\begin{equation}
    \parD{\lambda}V_U<0,\quad \lim_{\lambda\rightarrow\infty}V_U(\lambda)=-\infty,\quad V_U(0)=k>x.
\end{equation}
Since $-\infty< x\leq k$, we conclude that there exists a $\lambda^*_U>0$ such that $V_U({\lambda^*_U})=x$.
\end{proof}

\begin{remark}
    Note we deliberately suppressed the dependence of $V_U$ on $x$, as we see later that the chosen $\lambda_U^*$ is independent of $x$.
\end{remark}

This gives our candidate design
\begin{equation}
    X_T^{\pi^*}:=X_T^{\pi_{\lambda^*_U}}=k-{\lambda^*_U} Z_T.
\end{equation}

\subsubsection{Optimality}

This section proves the following:
\begin{theorem}
\label{thm.optimal.U}
The design $\pi_{\lambda_U^*}$ is optimal with respect to 
\eqref{Problem.U}.
\end{theorem}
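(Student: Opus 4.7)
The plan is to execute the standard martingale-method argument for this concave (quadratic) problem. The first step is to observe that, thanks to the choice $\beta=-b/\sigma$, It\=o's product rule applied to $Z_tX_t^\pi$ makes all drift terms cancel, so that $ZX^\pi$ is a local martingale; combined with the admissibility bound on $\Pi$ and the finiteness of the relevant moments of the exponential martingale $Z$, it is in fact a true martingale. This yields the budget constraint
\begin{equation}
\mathbb{E}[Z_T X_T^\pi] = x \qquad \forall\,\pi\in\Pi,
\end{equation}
which is, in particular, satisfied by our candidate $\pi_{\lambda_U^*}$ by Theorem \ref{thm.existence.U}.

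With the budget constraint in hand, the argument reduces to a path-wise concavity comparison that is equivalent to a Lagrangian duality. For any admissible $\pi$, using the first-order Taylor inequality of the concave function $y\mapsto-\frac{1}{2}(k-y)^2$ at the candidate point $y=X_T^{\pi^*}=k-\lambda_U^* Z_T$, and noting that $k-X_T^{\pi^*}=\lambda_U^* Z_T$, one obtains the path-wise bound
\begin{equation}
-\tfrac{1}{2}\bigl(k-X_T^\pi\bigr)^2 \le -\tfrac{1}{2}\bigl(k-X_T^{\pi^*}\bigr)^2 + \lambda_U^* Z_T\bigl(X_T^\pi-X_T^{\pi^*}\bigr).
\end{equation}
Taking $\mathbb{E}[\cdot]$ and using that both $\mathbb{E}[Z_T X_T^\pi]$ and $\mathbb{E}[Z_T X_T^{\pi^*}]$ equal $x$, the linear correction term vanishes, giving
\begin{equation}
\mathbb{E}\Bigl[-\tfrac{1}{2}(k-X_T^\pi)^2\Bigr] \le \mathbb{E}\Bigl[-\tfrac{1}{2}(k-X_T^{\pi^*})^2\Bigr],
\end{equation}
which is precisely the claimed optimality.

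The main obstacle is the promotion of $ZX^\pi$ from a local to a true martingale: the admissibility set $\Pi$ only provides an $\omega$-dependent bound $B_{\pi,\omega}$ on $\pi$, so one cannot directly invoke global boundedness of the integrand. I would handle this by a standard stopping-time localization, first establishing the comparison on a localizing sequence and then passing to the limit, using that $Z$ is a square-integrable exponential martingale and that $(1-\pi_t)\sigma Z_t$ is locally square integrable along each path. A secondary point to tie up is the attainability of $X_T^{\pi^*}$ by an admissible $\pi^*\in\Pi$: this is provided by the representation \eqref{find.sample.path} together with the explicit It\=o-based recovery carried out in Appendix \ref{ss.comp.pit}, where the resulting proportion is shown to be adapted and pathwise bounded, confirming $\pi^*\in\Pi$.
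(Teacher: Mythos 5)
Your proposal is correct and follows essentially the same route as the paper: the first-order concavity inequality you write at $X_T^{\pi^*}=k-\lambda_U^*Z_T$ is exactly equivalent to the paper's path-wise maximisation of the Lagrangian $f(B;\omega)=-\frac{1}{2}(k-B)^2-\lambda_U^*[Z_T(\omega)B-x]$, followed by taking expectations and invoking the budget identity $\mathbb{E}[Z_TX_T^{\pi}]=x$. Your extra care about promoting $ZX^\pi$ from a local to a true martingale and about admissibility of $\pi^*$ addresses points the paper treats as immediate, but does not change the argument.
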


\begin{proof}
Consider the optimisation problem of maximizing the function $f$ over $B$ defined by
\begin{equation}
    f(B;\omega):=-\frac{1}{2}(k-B)^2-\lambda^*_U[Z_T(\omega)B-x].
\end{equation}
with support $B\in(-\infty,k]$.
This is a quadratic function with an axis of symmetry at the point $k-\lambda^*_U Z_T(\omega)$. Since this is smaller than $k$, we immediately conclude that the maximum over $(-\infty,k]$ must be obtained at the point $k-\lambda^*_U Z_T(\omega)$. Therefore, we have that 
$$B^*(\omega):=k-\lambda^* Z_T(\omega)$$
maximises $f(\cdot,\omega)$ over $(-\infty,k]$. In other words, we have that
$$f(X_T^{\pi_{\lambda^*_U}}(\omega),\omega)\geq f(X_T^\pi(\omega),\omega)$$
for any $\pi\in\Pi$.
Now, taking expectation and recalling that $\lambda^*_U$ is chosen such that $\mathbb{E}[Z_TX_T^{\pi_{\lambda^*_U}}]=x$, we see that
\begin{equation}
    \mathbb{E}\left[-\frac{1}{2}(k-X_T^{\pi^*})^2\right]=\mathbb{E}\left[-\frac{1}{2}(k-X_T^{\pi_{\lambda^*_U}})^2\right]\geq \mathbb{E}\left[-\frac{1}{2}(k-X_T^\pi)^2\right],
\end{equation}
which is our desired result.
\end{proof}

\subsection{Strict constraint: $X_T^\pi \ge C$}

In this section, we consider the problem of maximizing utility under a strict solvency constraint, i.e. we consider the problem
\begin{equation}\label{Problem.C}
    \max_{\pi\in\Pi}\mathbb{E}[-\frac{1}{2}(k-X_T^\pi)^2],
\end{equation}
subject to
\begin{equation}\label{Constraint.C}
    X^\pi_T\geq C,
\end{equation}
with
\begin{equation}
    dX_t^\pi=(1-\pi_t)(bdt +\sigma dW_t),
    \quad X(0)=x\in(C,k).
\end{equation}

\subsubsection{Candidate}

We define the family of designs implicitly given by the terminal wealth,
\begin{equation}\label{st.C}
    X_T^{\pi_\lambda^C}:=\max(k-\lambda Z_T,C),
\end{equation}
characterised by the parameter $\lambda>0$. Denote also $\Pi_C$ the associated family of designs, i.e.
\begin{equation}
    \label{PiC}
    \Pi_C = \{\pi_\lambda^C: \lambda > 0\},
\end{equation}
and we say $\pi$ is of class $\Pi_C$ if $\pi\in \Pi_C$.

\begin{figure}
		\centering
		\includegraphics[width=.49\textwidth]{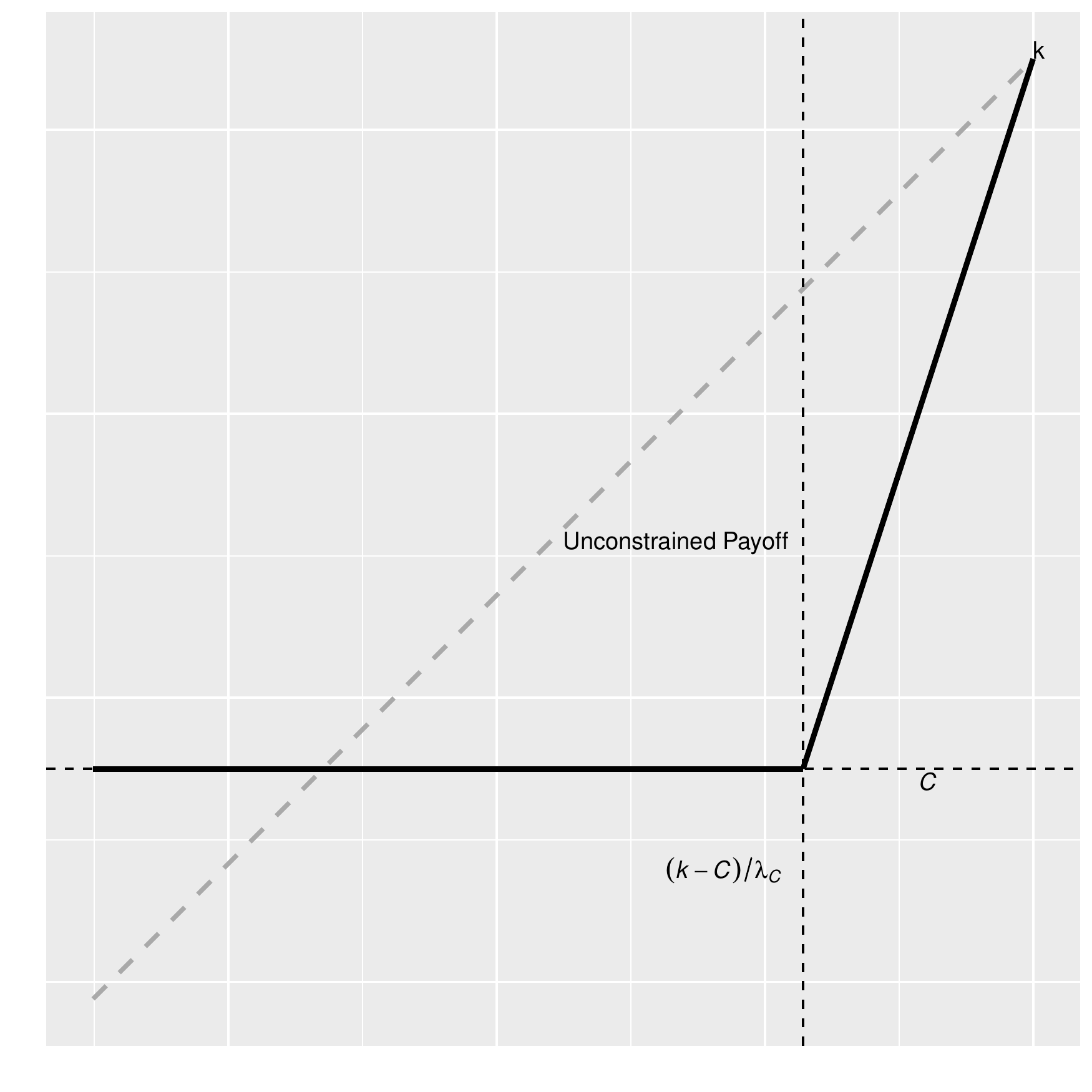}
		\caption{Strict constraint}
		\label{F_Strict} 
	\end{figure}

Figure \ref{F_Strict} displays the payoff of the unconstrained and strictly constrained cases as a function of the unconstrained payoff.  Obviously, with that construction, the unconstrained payoff with the dashed line forms a straight line with a slope equal to 1. The maximal payoff is $k$, so there the line ends. As a function of the unconstrained design, the constrained design appears as a solid line, capped off at the constraint level $c$ indicated by the dotted line and ending at $k$. The slope of the solid line is determined by the budget constraint or, said differently, such that the value of the two payoffs is the same, namely the initial capital. Figure \ref{F_Strict} also illustrates that if $C>k$, the unconstrained payoff also solves the constrained problem.

\begin{remark}\label{R_interpret}
The interpretation of Figures \ref{F_Strict}-\ref{F_optishapes} is performed analogously. The various constraints result in specific kink points, and the slopes are then determined such that the value of the option-like payoff equals the initial capital. In all these figures, the baseline is the optimal unconstrained design such that the kinks and the slopes illustrate how to exactly protect that reinsurance design with options to obey the constraints in the different optimization problems. 

Note that there are different ways to illustrate the payoff depending on the argument in the function. Here, we chose the unconstrained design as this gives an immediate interpretation of the impact of the various constraints.
\end{remark}

\subsubsection{Existence}
We want to show the following:
\begin{theorem}\label{thm.existence.C}
There exists a $\lambda > 0$ such that 
$\mathbb{E}[Z_TX_T^{\pi_\lambda^C}]=x$.
\end{theorem}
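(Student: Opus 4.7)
The plan is to mimic the blueprint of Theorem \ref{thm.existence.U}, with straightforward modifications to accommodate the floor at $C$. I would define
\begin{equation}
V_C(\lambda) := \mathbb{E}[Z_T X_T^{\pi_\lambda^C}] = \mathbb{E}\bigl[Z_T \max(k - \lambda Z_T, C)\bigr], \qquad \lambda > 0,
\end{equation}
and then seek $\lambda_C^* > 0$ satisfying $V_C(\lambda_C^*) = x$ via the intermediate value theorem, by establishing continuity of $V_C$ together with its limiting behavior as $\lambda \to 0^+$ and $\lambda \to \infty$.

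The key steps, in order, would be as follows. First I would verify continuity: pathwise, $\lambda \mapsto \max(k - \lambda Z_T, C)$ is Lipschitz in $\lambda$ with slope $Z_T$, and for every $\lambda > 0$ one has the deterministic sandwich $C \le \max(k - \lambda Z_T, C) \le k$, so that $|Z_T \max(k - \lambda Z_T, C)| \le \max(|k|, |C|)\, Z_T$. Since $Z$ is a driftless geometric Brownian motion and hence a true $\mathbb{P}$-martingale with $\mathbb{E}[Z_T] = 1$, this dominating function is integrable, and dominated convergence gives continuity of $V_C$. Next I would compute the two boundary values: as $\lambda \to 0^+$, $\max(k - \lambda Z_T, C) \to \max(k,C) = k$ pointwise (using $k > C$), so dominated convergence yields $V_C(0^+) = k\, \mathbb{E}[Z_T] = k$; as $\lambda \to \infty$, $\max(k - \lambda Z_T, C) \searrow C$ pointwise, and monotone convergence gives $V_C(\lambda) \to C$. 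Since $x \in (C, k)$ by assumption, this produces $V_C(0^+) = k > x > C = \lim_{\lambda \to \infty} V_C(\lambda)$, and IVT delivers the required $\lambda_C^* > 0$.

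Relative to the unconstrained proof, the main obstacle is essentially cosmetic: the kink in the $\max$ prevents the clean pointwise differentiation used in Theorem \ref{thm.existence.U}. This is harmless for existence, since continuity plus IVT suffices, but if I also wanted strict monotonicity (hence uniqueness of $\lambda_C^*$), I would observe that for every $\lambda > 0$,
\begin{equation}
\partial_\lambda V_C(\lambda) = -\mathbb{E}\bigl[Z_T^2\, \mathbf{1}\{k - \lambda Z_T > C\}\bigr] < 0,
\end{equation}
because $Z_T$ is log-normally distributed with full support on $(0,\infty)$, so the event $\{Z_T < (k-C)/\lambda\}$ has strictly positive probability. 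A minor point worth flagging is that, unlike $\lambda_U^*$ in the unconstrained problem, the resulting $\lambda_C^*$ will genuinely depend on $x$ (and on $C$), which will need to be tracked when the same style of argument is later used to establish optimality path-wise.
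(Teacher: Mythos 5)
Your proposal is correct and follows essentially the same route as the paper: define $V_C(\lambda)=\mathbb{E}[Z_T X_T^{\pi_\lambda^C}]$, establish continuity and the boundary behaviour $V_C(0^+)=k>x$ and $\lim_{\lambda\to\infty}V_C(\lambda)=C<x$, and conclude by the intermediate value theorem. The paper additionally records the strict monotonicity $\partial_\lambda V_C<0$, which you also supply, so the two arguments coincide; your extra detail on domination and the dependence of $\lambda_C^*$ on $x$ and $C$ is harmless.
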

\begin{proof}

Define the value function
\begin{align*}
   V_C(\lambda):=\mathbb{E}[Z_TX_T^{\pi_\lambda^C}].
\end{align*}
 It is also easy to see that $V_C$ is continuous with respect to its arguments (e.g. by direct computation via \eqref{app.help}).

We see immediately from the definition $X_T^{\pi_\lambda^C}$ that, since $Z_T>0$, we have that
\begin{equation}
    \parD{\lambda}V_C(\lambda)<0,\quad \lim_{\lambda\rightarrow\infty}V_C(\lambda)=C<x,\quad V_C(0)=k>x.
\end{equation}
Therefore, we conclude that there exists a $\lambda^*_C>0$ such that $V_U(\lambda^*_C)=x$.
\end{proof}

This gives our candidate design
\begin{equation} \label{E_optiStrict}
    X_T^{\pi^*_C}:=X_T^{\pi_{\lambda^*_C}^C}=\max(k-{\lambda^*_C} Z_T,C).
\end{equation}

\subsubsection{Optimality}

\begin{theorem}
\label{thm.optimal.C}
The design $\pi_C^*$ is optimal with respect to 
\eqref{Problem.C}, subject to \eqref{Constraint.C}.
\end{theorem}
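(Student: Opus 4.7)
The plan is to mirror the path-wise Lagrangian argument of Theorem \ref{thm.optimal.U}, now incorporating the solvency constraint. I would start by fixing the multiplier $\lambda^*_C$ produced by Theorem \ref{thm.existence.C} and defining, for each $\omega$, the auxiliary function
\begin{equation*}
f(B;\omega) := -\frac{1}{2}(k-B)^2 - \lambda^*_C\bigl[Z_T(\omega)\, B - x\bigr],
\end{equation*}
which I now maximise over the feasible region $B \geq C$ dictated by \eqref{Constraint.C}. Since $f(\cdot;\omega)$ is strictly concave with unconstrained maximiser $k - \lambda^*_C Z_T(\omega)$, two cases arise path-wise. When $k - \lambda^*_C Z_T(\omega) \geq C$ the interior maximiser is feasible and optimal; when $k - \lambda^*_C Z_T(\omega) < C$ the parabola is strictly decreasing on $[C,\infty)$, so the constrained maximiser is at the boundary $B = C$. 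Combining the two cases yields the path-wise optimiser
\begin{equation*}
B^*(\omega) = \max\bigl(k - \lambda^*_C Z_T(\omega),\, C\bigr) = X_T^{\pi^*_C}(\omega).
\end{equation*}

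The second step is to lift this path-wise inequality to an inequality in expectation. For any admissible $\pi \in \Pi$ satisfying \eqref{Constraint.C}, one has $X_T^\pi(\omega) \geq C$ almost surely, so $f\bigl(X_T^{\pi^*_C}(\omega),\omega\bigr) \geq f\bigl(X_T^\pi(\omega),\omega\bigr)$ pointwise. Taking expectations and invoking the budget identity $\mathbb{E}[Z_T X_T^\pi] = x$---which holds for every admissible $\pi$ because $X^\pi_t Z_t$ is a martingale (the drift in $d(XZ)$ vanishes thanks to the choice $\beta = -b/\sigma$, and the local boundedness built into $\Pi$ supplies the required integrability)---together with the analogous identity $\mathbb{E}[Z_T X_T^{\pi^*_C}] = x$ guaranteed by the very construction of $\lambda^*_C$, both linear Lagrangian penalty terms cancel in expectation. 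One is then left with
\begin{equation*}
\mathbb{E}\Bigl[-\frac{1}{2}\bigl(k - X_T^{\pi^*_C}\bigr)^2\Bigr] \geq \mathbb{E}\Bigl[-\frac{1}{2}\bigl(k - X_T^{\pi}\bigr)^2\Bigr],
\end{equation*}
which is the claimed optimality.

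The main subtlety---mild compared with the probability and expected-shortfall problems that follow, but still the key divergence from the unconstrained proof---is correctly handling the split into the interior and boundary regimes of the path-wise optimisation, and in particular confirming that on the binding event $\{k - \lambda^*_C Z_T < C\}$ the pointwise maximiser is pushed to $B = C$ (so that the candidate is genuinely $\max(k - \lambda^*_C Z_T, C)$ and not the unconstrained $k - \lambda^*_C Z_T$). A secondary, routine check is the martingale property of $X^\pi Z$ over the whole class $\Pi$, which underpins the budget identity used to annihilate the Lagrangian term; this can be verified by applying It\=o's lemma to $X^\pi Z$ and combining with the admissibility bound.
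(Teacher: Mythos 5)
Your proposal is correct and follows essentially the same path-wise Lagrangian argument as the paper's proof in Appendix \ref{app.proof.C}: the same penalised function $f$, the same split into the interior regime ($k-\lambda^*_C Z_T\geq C$) and the boundary regime where the concave parabola forces $B=C$, and the same cancellation of the linear term via the budget identity. Your explicit justification of $\mathbb{E}[Z_T X_T^{\pi}]=x$ through the martingale property of $X^\pi Z$ is a point the paper leaves implicit, but it does not change the route.
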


\begin{proof}
The proof is similar to that of Theorem \ref{thm.optimal.U}. See Appendix \ref{app.proof.C} for details.
\end{proof}

\subsection{Probability (VaR) constraint: $\mathbb{P}[X_T^\pi \ge C]\ge 1-\epsilon$} \label{sec.prob.constraint}

In this section, we consider the problem of maximizing the utility under a VaR-type of solvency constraint where the terminal wealth needs to exceed a certain level with a given tolerance probability. The objective combines a utility maximization perspective with a tolerance shortfall probability relative to some given level. The tolerance probability and the level from which we measure shortfall are ingredients in the objective. These can be set internally as part of the economic management of the surplus and/or externally as an ingredient in the solvency regulation. The problem is formalized by
\begin{equation}\label{Problem.P}
    \max_{\pi\in\Pi}\mathbb{E}\left[-\frac{1}{2}\left(k-X_T^\pi\right)^2\right],
\end{equation}
subject to
\begin{equation}\label{constraint.P}
    \mathbb{P}[X_T^\pi\geq C]\geq  1-\varepsilon,
\end{equation}
with
\begin{equation}
    dX_t^\pi=(1-\pi_t)(bdt +\sigma dW_t),
    \quad X(0)=x\in[C,k].
\end{equation}
Thus, the parameters $C$ and $\epsilon$ specify the shortfall and probability tolerance levels, respectively. 

\subsubsection{Candidate}

We define the family of designs implicitly given by the terminal wealth,
\begin{equation}\label{st.P}
    X_T^{\pi_{\lambda,c}}=
    \begin{cases}
    k-\lambda Z_T, ~&k-\lambda Z_T\notin[c,C],\\
    C , ~&k-\lambda Z_T\in[c,C],
    \end{cases}
\end{equation}
characterised by the constants $\lambda,c>0$ with $c\leq C\leq k$. We also denote the family $\Pi_P$, i.e.
\begin{equation}
    \label{PiP}
    \Pi_P:=\{\pi_{\lambda,c}: \lambda>0, c\leq C\}.
\end{equation}

Before moving on to the calculations, for convenience, we take the following transformation
\begin{align}
    &g_1 = \frac{k-C}{\lambda},\\
    &g_2 = \frac{k-c}{\lambda},
\end{align}
and we define the value function
\begin{equation}
    \label{VP}
    V_P(g_1,g_2):=\mathbb{E}[Z_TX_T^{\pi_{\lambda, c}}],
\end{equation}
where we have $0<g_1\leq g_2$. It is also easy to see that $V_P$ is continuous with respect to its arguments (e.g. by direct computation via \eqref{app.help}).

Figure \ref{F_VaR} is the VaR-constraint version of Figure \ref{F_Strict}. Again, the unconstrained payoff appears as the dashed line with slope 1. As is the case for the strict constraint, the VaR-constraint payoff is capped off at the level $C$, but only so until the unconstrained payoff is small enough. Then there is a jump in the payoff and from that point and down the payoff follows the straight line we had above $C$. The combination of (i) the point of discontinuity and (ii) the slope of the line above $C$ (identical to that below the point of discontinuity, too) are two unknowns determined by the two equations formed by the VaR-constraint and the budget constraint; see \eqref{E_VaRandbudgetconstraints} below. The figure also illustrates that if the VaR constraint is readily fulfilled by the unconstrained payoff (see \eqref{E_Unotopti} below), then the unconstrained payoff also solves the constrained problem.

	\begin{figure}[htb]
		\centering
		\includegraphics[width=.49\textwidth]{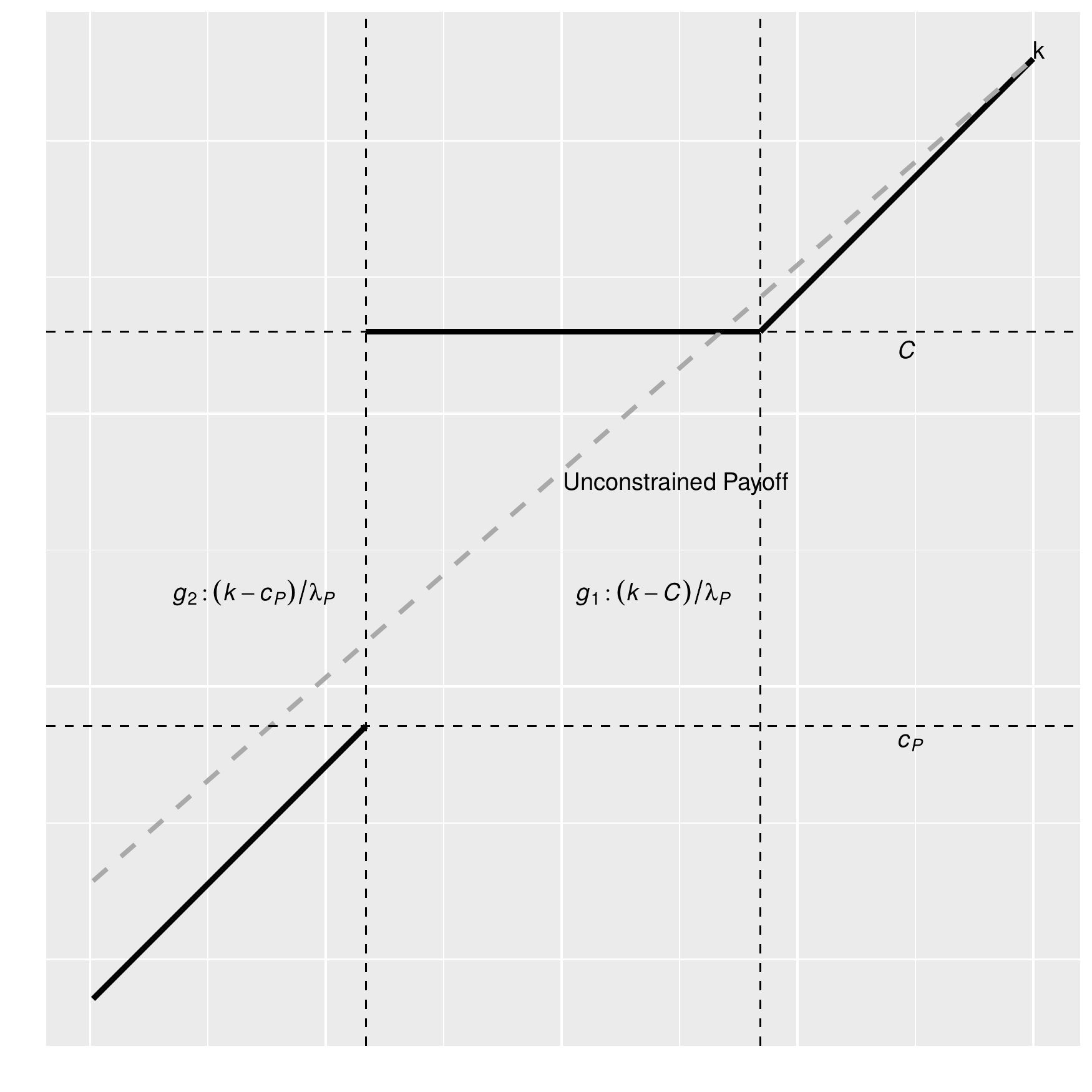}
		\caption{VaR constraint}
		\label{F_VaR}
	\end{figure}
	
\subsubsection{Existence}

We want to show the following:
\begin{theorem}\label{thm.existence.P}
Suppose $\mathbb{P}[X_T^{\pi_U^*}\geq C]<1-\varepsilon$. There exists $g_1$ and $g_2$ with $0<g_1\leq g_2$ such that 
$V_P(g_1,g_2)=x$.
\end{theorem}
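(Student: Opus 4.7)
The plan is to pin down $g_2$ using the VaR constraint made tight, and then vary $g_1$ to hit the budget constraint via the intermediate value theorem. From the candidate form \eqref{st.P} one checks that $X_T^{\pi_{\lambda,c}}<C$ precisely on $\{Z_T>g_2\}$, so \eqref{constraint.P} reduces to $\mathbb{P}[Z_T>g_2]\le\varepsilon$. Since $Z_T$ has a log-normal distribution with strictly decreasing survival function on $(0,\infty)$, I would fix $g_2>0$ as the unique solution of $\mathbb{P}[Z_T>g_2]=\varepsilon$, so that the VaR constraint holds with equality for every admissible $g_1\in(0,g_2]$.

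With $g_2$ frozen, set $\phi(g_1):=V_P(g_1,g_2)$ on $(0,g_2]$. Continuity of $\phi$ follows from its explicit integral representation via \eqref{app.help} together with the existence of all moments of $Z_T$. At the right endpoint $g_1=g_2$ the flat region $[c,C]$ collapses to a $\mathbb{P}$-null set (since $Z_T$ is continuous), so $X_T^{\pi_{\lambda,c}}=k-\tilde\lambda Z_T$ almost surely with $\tilde\lambda:=(k-C)/g_2$, and hence $\phi(g_2)=k-\tilde\lambda\,\mathbb{E}[Z_T^2]$. Recalling from the proof of Theorem \ref{thm.existence.U} that $x=k-\lambda_U^*\,\mathbb{E}[Z_T^2]$, one has $\phi(g_2)-x=(\lambda_U^*-\tilde\lambda)\,\mathbb{E}[Z_T^2]$. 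The hypothesis $\mathbb{P}[X_T^{\pi_U^*}\ge C]<1-\varepsilon$ rewrites as $\mathbb{P}[Z_T>(k-C)/\lambda_U^*]>\varepsilon=\mathbb{P}[Z_T>g_2]$, forcing $(k-C)/\lambda_U^*<g_2$, i.e.\ $\tilde\lambda<\lambda_U^*$; therefore $\phi(g_2)>x$.

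At the other endpoint, as $g_1\to 0^+$, $\lambda=(k-C)/g_1\to\infty$ and the contribution from $\{Z_T>g_2\}$---a set of fixed positive probability $\varepsilon$---equals $k\,\mathbb{E}[Z_T\mathbf{1}_{\{Z_T>g_2\}}]-\lambda\,\mathbb{E}[Z_T^2\mathbf{1}_{\{Z_T>g_2\}}]\to-\infty$. The flat-region contribution is bounded by $|C|$ and the contribution from $\{Z_T<g_1\}$ vanishes since $Z_T$ is integrable and $g_1\to 0$, so $\phi(g_1)\to-\infty$. Continuity and the intermediate value theorem then produce the desired $g_1^*\in(0,g_2]$ with $V_P(g_1^*,g_2)=x$. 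The main obstacle in this programme is the algebraic step giving $\phi(g_2)>x$: one must translate the probabilistic hypothesis on the unconstrained optimizer into a strict comparison of $\tilde\lambda$ and $\lambda_U^*$ via the tight VaR quantile, which is exactly where the assumption \emph{has to} be used; the remainder is a routine IVT argument.
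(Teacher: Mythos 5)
Your proposal is correct and follows essentially the same route as the paper: fix $g_2$ by making the VaR constraint tight, then apply the intermediate value theorem in $g_1$, using the hypothesis on $\pi_U^*$ to show the value exceeds $x$ at $g_1=g_2$ (via the comparison $\tilde\lambda<\lambda_U^*$ with the unconstrained multiplier) and showing the value tends to $-\infty$ as $g_1\downarrow 0$ because $\lambda\to\infty$ on the fixed-probability event $\{Z_T>g_2\}$. Your write-up of the endpoint comparison $\phi(g_2)>x$ is in fact somewhat more explicit than the paper's, but the underlying argument is identical.
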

\begin{proof}

The quantities $\mathbb{E}[Z_TX_T^{\pi_{\lambda,c}}]$ and $\mathbb{P}[Z_T\leq g_2]$ can be computed with ease (e.g. via \eqref{app.help}). 
Solving $\mathbb{P}[X^{\pi_{\lambda,c}}_T\geq C] = \mathbb{P}[Z_T\leq g_2] = 1-\varepsilon$, we get
\begin{equation}
    g_2^* = e^{-\frac{1}{2}\beta^2T-\beta\sqrt{T}\Phi^{-1}(1-\varepsilon)},
\end{equation}
where $\Phi$ is the distribution function of a standard Gaussian random variable.

We are now left with $g_1 = (k-C)/\lambda$.
First, consider when $g_1\uparrow g_2^*$. This implies that the value function $V_P(\cdot,g_2^*)$ converges to 
$V_U(g_2^*)$, the value function of a $\Pi_U$ design considered in Section \ref{sec.U}. Since by assumption $\pi_U^*$ is not optimal, i.e.
\begin{equation}\label{E_Unotopti}
    \mathbb{P}[X_T^{\pi_U^*}\geq C]<1-\varepsilon,
\end{equation}
then with $g_1\uparrow g_2^*$ and $\mathbb{P}[Z_T\leq g_2^*] = 1-\varepsilon$, we can deduce that 
$$
  \lambda < \lambda_U^* \implies k-\lambda e^{\beta^2T}> k-\lambda_U^* e^{\beta^2T}
  \iff 
  \lim_{g\uparrow g_2^*}V_P(g,g_2^*)>x
  .
$$

On the other hand, consider $g_1\downarrow 0$. This implies that the design $\lambda\uparrow\infty$ and $c\downarrow-\infty$. This is effectively a design of class $\pi_C$.

Note $\beta<0$, $\lambda g_1 = k-C$ and therefore invoking the value function we see that when $g_1\downarrow 0$, 
$$
    V_P(g_1,g_2^*)
    \rightarrow k-(k-C)\Phi\left(\frac{\frac{1}{2}\beta^2T-\log(g_2^*)}{\beta\sqrt{T}}\right)
        -\lambda e^{\beta^2T}\left(
            1- \Phi\left(\frac{\frac{3}{2}\beta^2T-\log(g_2^*)}{\beta\sqrt{T}}\right) 
        \right) \downarrow -\infty.
$$
Consequently, due to the continuity property $g_1\mapsto V_P(g_1,g_2^*)$, when $g_1$ increases from $0$ to $g_2^*$, there must exist a $g_1^*$ such that $\mathbb{E}[Z_T X^{\pi_{\lambda,c}}_T] =x$.
This gives the desirable pair $(g_1^*, g_2^*)$. 
\end{proof}

In other words, we have constructed a design of the form \eqref{st.P} such that
\begin{equation} \label{E_VaRandbudgetconstraints}
    \begin{cases}
        &\mathbb{P}\left[X^{\pi_{\lambda_P^*,c_P^*}}_T\geq C\right]=1-\varepsilon,\\
        &\mathbb{E}\left[Z_T X^{\pi_{\lambda_P^*,c_P^*}}_T\right]=x,
    \end{cases}
\end{equation}
provided that $\pi_U^*$ is not optimal.

\subsubsection{Optimality}

\begin{theorem}
\label{thm.optimal.P}
The design $\pi_{\lambda_P^*,c_P^*}$ is optimal with respect to 
\eqref{Problem.P}, subject to \eqref{constraint.P}.
\end{theorem}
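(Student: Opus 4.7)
The plan is to mimic the path-wise Lagrangian argument used for Theorems~\ref{thm.optimal.U} and~\ref{thm.optimal.C}, but now with \emph{two} Lagrange multipliers: one, $\lambda_P^*$, for the budget constraint $\mathbb{E}[Z_T X_T^\pi]=x$ (which is automatic on $\Pi$ because $ZX^\pi$ is a local, hence, under the $\omega$-wise boundedness of $\pi$, a true martingale), and a second, $\mu^*\geq 0$, for the VaR constraint \eqref{constraint.P}. Concretely, with $\lambda_P^*$ given by Theorem~\ref{thm.existence.P}, I would work with the auxiliary path-wise function
\begin{equation*}
 f(B;\omega):=-\tfrac{1}{2}(k-B)^2-\lambda_P^*\,\bigl[Z_T(\omega)B-x\bigr]-\mu^*\,\bigl[\mathds{1}_{\{B<C\}}-\varepsilon\bigr],
\end{equation*}
on $B\in(-\infty,k]$, and show that $B=X_T^{\pi_{\lambda_P^*,c_P^*}}(\omega)$ from \eqref{st.P} maximises $f(\cdot;\omega)$ for almost every $\omega$.

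The crux is the choice of $\mu^*$. Writing $z=Z_T(\omega)$, the function $f(\cdot;\omega)$ is strictly concave quadratic on each of the two half-lines $B\geq C$ and $B<C$, with identical vertex $k-\lambda_P^* z$ and a downward jump of size $\mu^*$ at $B=C$. Provided $z>g_1^*:=(k-C)/\lambda_P^*$, both branch-maxima are interior and a direct computation yields
\begin{equation*}
 \max_{B\geq C}f - \max_{B<C}f \;=\; \mu^* - \tfrac{1}{2}\bigl[(k-C)-\lambda_P^* z\bigr]^2,
\end{equation*}
which changes sign precisely at $|\lambda_P^* z-(k-C)|=\sqrt{2\mu^*}$. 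Requiring indifference at the threshold $z=g_2^*=(k-c_P^*)/\lambda_P^*$---exactly where the candidate design in \eqref{st.P} jumps from $C$ down to $c_P^*$---forces the choice $\mu^*:=\tfrac{1}{2}(C-c_P^*)^2\geq 0$. A three-case analysis on $z$ (namely $z\leq g_1^*$, $g_1^*<z\leq g_2^*$, and $z>g_2^*$) then verifies case by case that the global maximiser of $f(\cdot;\omega)$ over $(-\infty,k]$ coincides on each region with the corresponding branch of \eqref{st.P}.

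With the path-wise inequality $f(X_T^{\pi_{\lambda_P^*,c_P^*}};\omega)\geq f(X_T^\pi;\omega)$ established for any competitor $\pi$ satisfying \eqref{constraint.P}, I would take expectations and combine three ingredients: (i) the martingale identity $\mathbb{E}[Z_TX_T^\pi]=x$, which cancels the $\lambda_P^*$-terms on both sides; (ii) the complementary slackness $\mathbb{P}[X_T^{\pi_{\lambda_P^*,c_P^*}}<C]=\varepsilon$ from \eqref{E_VaRandbudgetconstraints}; and (iii) the VaR constraint $\mathbb{P}[X_T^\pi<C]\leq\varepsilon$ on the competitor. Since $\mu^*\geq 0$, the residual $\mu^*\bigl(\varepsilon-\mathbb{P}[X_T^\pi<C]\bigr)$ is non-negative and drops out favourably, yielding $\mathbb{E}\bigl[-\tfrac{1}{2}(k-X_T^{\pi_{\lambda_P^*,c_P^*}})^2\bigr]\geq\mathbb{E}\bigl[-\tfrac{1}{2}(k-X_T^\pi)^2\bigr]$.

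The main obstacle I expect is the non-convexity introduced by the indicator in the VaR constraint, which rules out any off-the-shelf Lagrangian-duality result and forces the explicit two-branch, path-wise comparison above. What makes the argument go through is that the two branches of $f(\cdot;\omega)$ are quadratics of \emph{equal curvature}, so imposing indifference at the single threshold $z=g_2^*$ automatically enforces optimality simultaneously on both sides and reproduces the discontinuous structure of \eqref{st.P} without extra parameters. A secondary technical point is to confirm $\mathbb{E}[Z_TX_T^\pi]=x$ for every $\pi\in\Pi$ under only the $\omega$-wise bound built into $\Pi$; this is standard, via a localising sequence of stopping times together with the explicit log-normal moments of $Z_T$.
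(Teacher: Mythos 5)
Your proposal is correct and follows essentially the same route as the paper's proof in Appendix~\ref{app.proof.P}: the paper uses the path-wise Lagrangian $f(B;\omega)=-\tfrac{1}{2}(k-B)^2-\lambda_P^*[Z_T(\omega)B-x]+\gamma^*[\mathds{1}(B\geq C)-(1-\varepsilon)]$ with $\gamma^*=\tfrac{1}{2}(C-c^*)^2$, which is algebraically identical to your penalisation $-\mu^*[\mathds{1}_{\{B<C\}}-\varepsilon]$ with $\mu^*=\tfrac{1}{2}(C-c_P^*)^2$, and the same three-case branch comparison (your thresholds $g_1^*,g_2^*$ in $z$ correspond exactly to the paper's cases in $k-\lambda_P^*Z_T$), followed by taking expectations with the budget identity and complementary slackness. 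Your indifference computation $\mu^*-\tfrac{1}{2}[(k-C)-\lambda_P^*z]^2$ reproduces the paper's Case~2 calculation verbatim.
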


\begin{proof}
The proof is similar to that of Theorem \ref{thm.optimal.U}. See Appendix \ref{app.proof.P} for details.
\end{proof}

\subsection{Expected shortfall constraint under $\mathbb{P}$: $\mathbb{E}[(C-X_T^\pi)_+ ]\le \nu$}

In this section, we consider the problem of maximizing utility under an Expected Shortfall-type of solvency constraint where the expected shortfall of the terminal wealth relative to some given level must exceed a given tolerance expected shortfall. As in the previous section, the objective combines a utility maximization perspective with a risk measure tolerance. Still, the tolerance is formulated with respect to the expected shortfall probability, where the shortfall is measured relative to some given level. The tolerance expected shortfall and the level from which shortfall is measured are ingredients in the objective. These can be set internally as part of the economic management of the surplus and/or externally as ingredients in a solvency regulation. The problem is formalised by

\begin{equation}\label{Problem.S}
    \max_{\pi\in\Pi}\mathbb{E}\left[-\frac{1}{2}\left(k-X_T^\pi\right)^2\right],
\end{equation}
subject to
\begin{equation}\label{Constraint.S}
    \mathbb{E}\left[\left(C-X_T^\pi\right)_+\right]\leq \nu,
\end{equation}
with
\begin{equation}
    dX_t^\pi=(1-\pi_t)(bdt +\sigma dW_t),
    \quad X(0)=x\in[C,k].
\end{equation}

Thus, the parameters $C$ and $\nu$ specify the shortfall and expected shortfall tolerance levels, respectively.

\subsubsection{Candidate}

We define the family of designs implicitly given by the terminal wealth,
\begin{equation}
    X_T^{\pi_{\lambda,\gamma}}=
    \begin{cases}
    k-\lambda Z_T, ~&k-\lambda Z_T\geq C\\
    C , ~&k-\lambda Z_T\in[C-\gamma,C]\\
    k-\lambda Z_T+\gamma,~&k-\lambda Z_T<C-\gamma
    \end{cases},
\end{equation}
characterised by the constants $\lambda>0$ and $\gamma\geq 0$, with $C\leq k$. Denote $\Pi_E$ the family of designs, i.e.
\begin{equation}
    \label{st.E}
    \Pi_E = \{\pi_{\lambda,\gamma}: \lambda>0, \gamma\geq 0\}.
\end{equation}
Similar to Section \ref{sec.prob.constraint}, we make the following transformations
$$
\begin{cases}
    h_1 =~& \frac{k-C}{\lambda},\\
    h_2 =~& \frac{k+\gamma-C}{\lambda},
\end{cases}
$$
and denote the value function 
\begin{equation}
    \label{VE}
    V_E(h_1,h_2) = \mathbb{E}\left[Z_TX_T^{\pi_{\lambda,\gamma}}\right].
\end{equation}
 It is also easy to see that $V_E$ is continuous with respect to its arguments (e.g. by direct computation via \eqref{app.help}).

	\begin{figure}[htb]
		\centering
		\includegraphics[width=.49\textwidth]{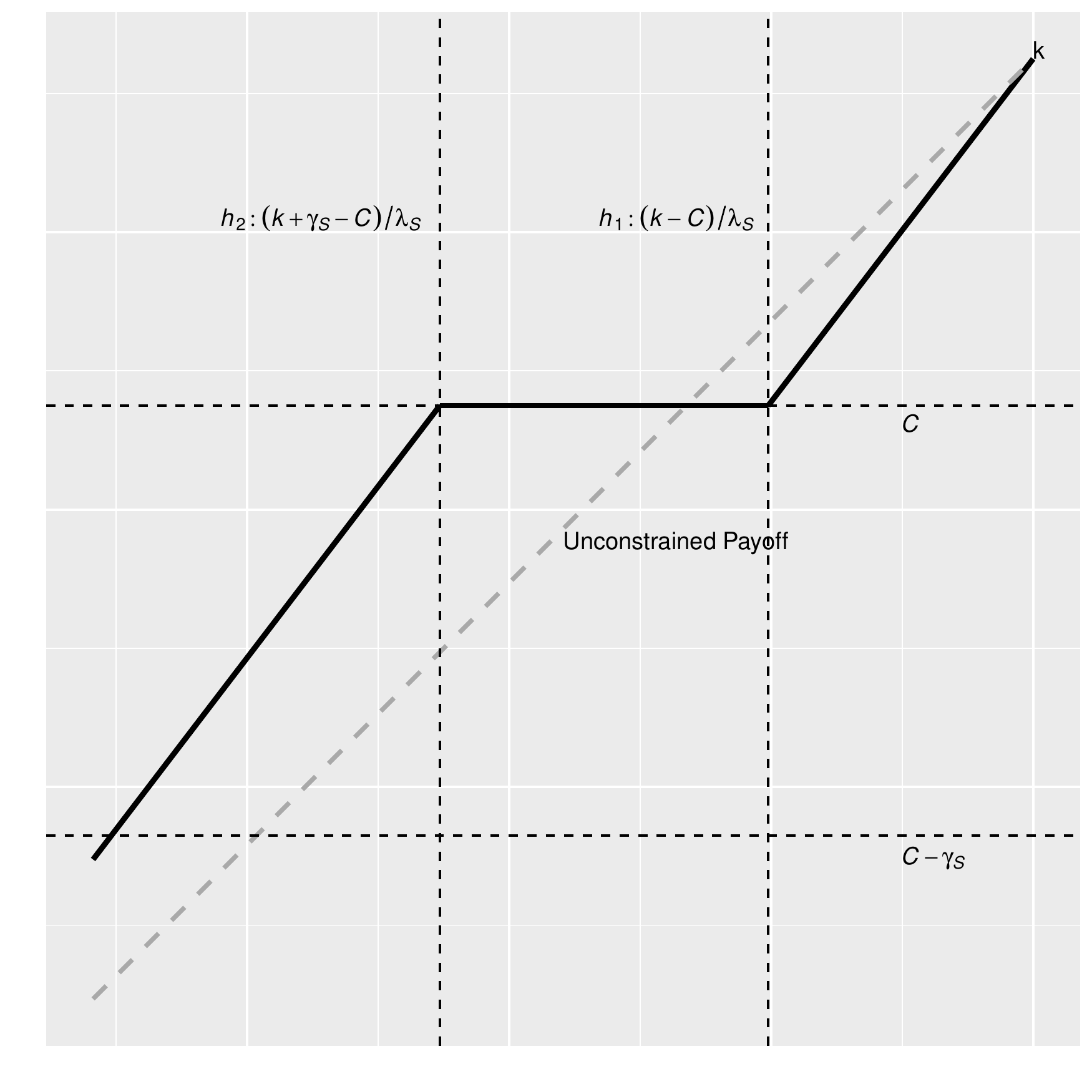}
		\caption{ES constraint under $\mathbb{P}$}
		\label{F_ES_P} 
	\end{figure}

Figure \ref{F_ES_P} is the $\mathbb{P}$-ES-constraint version of Figures \ref{F_Strict} and \ref{F_VaR}. The dashed line is the same. The upper part of the solid line is also the same. But instead of a point of discontinuity, the payoff has another kink from which it follows the same slope as in the upper part. Again, (i) the slope of the payoff (outside the constant part in the middle) and (ii) the point at which the line kinks in its lower end are two unknowns determined by the two equations formed by the shortfall and the budget constraint as in \eqref{E_ESPandbudgetconstraints} below.  The figure also illustrates that if the ES-constraint is readily fulfilled by the unconstrained payoff (that is, if \eqref{Constraint.S} is not verified under $\pi_U^*$), then the unconstrained payoff also solves the $\mathbb{P}$-ES-constraint problem.

\subsubsection{Existence}

We want to show the following:
\begin{theorem}\label{thm.existence.E}
Suppose $\mathbb{E}\left[\left(C-X_T^{\pi_U^*}\right)_+\right]>\nu$. There exists $h_1$ and $h_2$ with $h_1\leq h_2$ such that 
$V_E(h_1,h_2)=x$.
\end{theorem}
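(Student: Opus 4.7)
My plan is to mirror the template of Theorem \ref{thm.existence.P}: use the ES constraint to collapse the two-parameter search over $(h_1,h_2)$ to a one-parameter curve, then invoke the intermediate value theorem on the budget function along that curve. First I would observe that only the third region $\{Z_T>h_2\}$ contributes to the shortfall, since there $C-X_T=\lambda(Z_T-h_2)$. Thus
\begin{equation*}
\mathbb{E}\bigl[(C-X_T^{\pi_{\lambda,\gamma}})_+\bigr]=\lambda\,\mathbb{E}[(Z_T-h_2)_+],
\end{equation*}
a quantity computable in closed form via \eqref{app.help}. Substituting $\lambda=(k-C)/h_1$ and setting the expression equal to $\nu$ rearranges to $h_1=\varphi(h_2):=(k-C)\mathbb{E}[(Z_T-h_2)_+]/\nu$, a continuous, strictly decreasing map with $\varphi(0^+)=(k-C)/\nu$ and $\varphi(h_2)\to 0$ as $h_2\to\infty$. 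Admissibility $h_1\le h_2$ (equivalently $\gamma\ge 0$) restricts attention to $h_2\in[h_2^{\min},\infty)$, where $h_2^{\min}$ is the unique fixed point of $\varphi$.

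Next I would set $F(h_2):=V_E(\varphi(h_2),h_2)$ and examine its boundary values. At $h_2=h_2^{\min}$ we have $\gamma=0$, the middle slab collapses, and the design reduces to the unconstrained shape $X_T=k-\lambda_E^{(0)}Z_T$ with $\lambda_E^{(0)}:=(k-C)/h_2^{\min}$. Since $\lambda\mapsto\mathbb{E}[(\lambda Z_T-(k-C))_+]$ is strictly increasing, the hypothesis $\mathbb{E}[(C-X_T^{\pi_U^*})_+]>\nu$ forces $\lambda_E^{(0)}<\lambda_U^*$, so $F(h_2^{\min})=k-\lambda_E^{(0)}\mathbb{E}[Z_T^2]>k-\lambda_U^*\mathbb{E}[Z_T^2]=x$. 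At the other endpoint, as $h_2\to\infty$ with $\lambda=\nu/\mathbb{E}[(Z_T-h_2)_+]\to\infty$, writing $X_T=C+\lambda(h_2-Z_T)$ on the tail rearranges the tail contribution to $V_E$ to
\begin{equation*}
C\,\mathbb{E}\bigl[Z_T\mathbf{1}_{\{Z_T>h_2\}}\bigr]-\nu\cdot\frac{\mathbb{E}[Z_T(Z_T-h_2)_+]}{\mathbb{E}[(Z_T-h_2)_+]}.
\end{equation*}
The ratio is a weighted mean of $Z_T$ on $\{Z_T>h_2\}$ with positive weight $Z_T-h_2$, hence at least $h_2$ and divergent. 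The first two regions contribute bounded quantities (the first shrinks because $\lambda h_1=k-C$ gives $\lambda\,\mathbb{E}[Z_T^2\mathbf{1}_{\{Z_T\le h_1\}}]\le(k-C)h_1\to 0$, the middle is bounded by $C\,\mathbb{E}[Z_T]=C$), so $F(h_2)\to-\infty$. Continuity of $F$ and the change of sign of $F-x$ then furnish $h_2^{\star}\in(h_2^{\min},\infty)$ with $F(h_2^{\star})=x$, and $h_1^{\star}:=\varphi(h_2^{\star})\le h_2^{\star}$ completes the argument.

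The only non-routine step is the tail analysis as $h_2\to\infty$: the naive picture (the design tending to the constant $C$ on a set of probability tending to $1$) suggests $F\to C$, and one must see past that to recognise that the $\lambda$-blowup on the vanishing tail dominates and drags $F$ to $-\infty$. Establishing this cleanly via the closed-form moments in \eqref{app.help} is the step requiring real care; the rest is the monotonicity-plus-IVT skeleton already used in the VaR proof.
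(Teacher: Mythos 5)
Your proposal is correct and follows essentially the same route as the paper's proof: reduce to a one-parameter family via the shortfall identity $\lambda=\nu/\mathbb{E}[(Z_T-h_2)_+]$, identify the $\gamma=0$ endpoint with a $\Pi_U$ design whose multiplier is below $\lambda_U^*$ by monotonicity of the shortfall (so the budget value exceeds $x$ there), show the budget value drops below $x$ as $h_2\to\infty$, and conclude by the intermediate value theorem. Your tail analysis as $h_2\to\infty$ is in fact spelled out more carefully than the paper's (which states the divergent limit somewhat informally), but this is a refinement of the same step, not a different argument.
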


\begin{proof}

The expected shortfall is calculated as 
\begin{align}
    \mathbb{E}\left[\left(C-X_T^{\pi_{\lambda,\gamma}}\right)_+\right] =~& (C-\gamma-k)\mathbb{P}[Z_T>h_2]+\lambda \mathbb{E}[Z_T;Z_T>h_2]\nonumber\\
    =~& \lambda(\mathbb{E}[Z_T;Z_T>h_2] - h_2\mathbb{P}[Z_T>h_2]).
\end{align}
Equating this to the target $\nu$, we have
\begin{equation}\label{h2.lambda}
    \lambda = \frac{\nu}{\mathbb{E}[Z_T-h_2;Z_T>h_2]},
\end{equation}
a function of $h_2$. 

Note further that the function
\begin{equation}
    h\mapsto \mathbb{E}[Z_T-h;Z_T>h]
\end{equation}
is a function decreasing from positive to $0$ when $h$ increases from $0$ to $\infty$. Hence, $\lambda$ is guaranteed to be positive.

Using the same profile argument as in Section \ref{sec.prob.constraint} with the scheme
$$
    h_2\rightarrow \lambda \rightarrow h_1
$$
(where we have to make sure $h_1\leq h_2$), we consider the budget function $$h_2\mapsto V_E(h_1(\lambda(h_2)), h_2)-x,$$ and we show that it has a finite zero.

To start, consider $h_2\uparrow\infty$. This can be done by setting $\gamma = \lambda h_2 + C-k$, with $\lambda = \lambda(h_2)$ given by \eqref{h2.lambda}. Consequently, we have $\lambda\uparrow\infty$ and $h_1\downarrow 0$, which implies that $V_E(h_1,h_2)$ converges to
$$
    k- (k-C)-\lambda e^{\beta^2T} = C-\lambda e^{\beta^2T} < C \leq x,
$$
after evaluating $V_E$ (e.g. applying \eqref{app.help} on \eqref{VE} with definition \eqref{st.E}) and taking limit.

Next, consider the case where $h_2$ is decreasing from some large constant $K$ to $0$, where at $K$ we have that $h_2>h_1(\lambda(h_2))$.
Ignoring the definition of $h_2$ at the moment, when $h_2\downarrow 0$, $\lambda(h)\rightarrow\nu$ (see \eqref{h2.lambda}) and hence $h_1(\lambda(h_2))\rightarrow (k-C)/\nu>0$, which implies $h_1(\lambda(h))>h$ for small enough $h$. Therefore, by the continuity of $h_1\circ \lambda$, there is a $h_0\in(0,\infty)$ such that $h_0 = h_1(\lambda(h_0))$ and the corresponding $\gamma$ value is $0$. Denote $\lambda_3:=\lambda(h_0)$, we have that $\pi_{\lambda,\gamma} = \pi_{\lambda_3} \in \Pi_U$. It remains to show that
$$
    \lambda_3<\lambda_U^*,
$$
as this implies that
$$
    x = V_U(\lambda^*_U) < V_U(\lambda_3) = 
    V_E(h_1(\lambda(h_0)), h_0),
$$
i.e. there is a $h_2\in (h_0,K)$ such that $V_E(h_1(\lambda(h_2)), h_2)=x$.

By direct computation, for the design $\pi_{\lambda}\in\Pi_U$, its expected shortfall is
$$
    (C-k)\mathbb{P}\left[Z_T>\frac{k-C}{\lambda}\right] + \lambda \mathbb{E}\left[Z_T;Z_T>\frac{k-C}{\lambda}\right] = 
    \lambda \mathbb{E}\left[Z_T - \frac{k-C}{\lambda};Z_T>\frac{k-C}{\lambda}\right];
$$
an increasing function of $\lambda$. From the assumption that $\pi^*_U$ is not optimal, we have that
$$
    \mathbb{E}\left[\left(C-X^{\pi_{\lambda^*_U}}_T\right)_+\right]> \nu = \mathbb{E}\left[\left(C-X^{\pi_{\lambda_3}}_T\right)_+\right]
    \iff \lambda^*_U>\lambda_3
    ,
$$
which completes the proof.
\end{proof}

In other words, supposing that $\pi_U^*$ is not optimal, we have constructed a design \begin{equation}
    \pi^*_S:=\pi_{\lambda^*_S,\gamma^*_S}, 
\end{equation}
which generates a terminal wealth
\begin{equation} \label{E_optiP}
    X_T^{\pi^*_S}=
    \begin{cases}
    k-\lambda^*_S Z_T, ~&k-\lambda^*_S Z_T\geq C,\\
    C , ~&k-\lambda^*_S Z_T\in[C-\gamma^*_S,C],\\
    k-\lambda^*_S Z_T+\gamma^*_S,~&k-\lambda^*_S Z_T<C-\gamma^*_S,
    \end{cases}
\end{equation}
with
\begin{equation} \label{E_ESPandbudgetconstraints}
    \begin{cases}
        &\mathbb{E}\left[\left(C-X_T^{\pi^*_S}\right)_+\right]=\nu,\\
        &\mathbb{E}\left[Z_T X_T^{\pi^*_S}\right]=x.
    \end{cases}
\end{equation}

\subsubsection{Optimality}

\begin{theorem}
\label{thm.optimal.E}
The design $\pi_{\lambda^*_S,\gamma^*_S}$ is optimal with respect to 
\eqref{Problem.S}, subject to \eqref{Constraint.S}.
\end{theorem}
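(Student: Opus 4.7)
The plan is to adapt the pathwise Lagrangian argument from Theorem~\ref{thm.optimal.U} to this two-constraint setting. If $\pi_U^*$ already satisfies \eqref{Constraint.S} the statement reduces to Theorem~\ref{thm.optimal.U}, so assume the shortfall constraint binds; the existence theorem then supplies $\lambda^*_S>0$ and $\gamma^*_S\geq 0$ characterizing $X_T^{\pi^*_S}$ via \eqref{E_optiP}. These two parameters will serve as Lagrange multipliers: $\lambda^*_S$ for the budget identity $\mathbb{E}[Z_T X_T^\pi]=x$ (valid for any admissible $\pi$ by the martingale property of $ZX^\pi$) and $\gamma^*_S$ for the shortfall inequality \eqref{Constraint.S}. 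The nonnegativity of $\gamma^*_S$ will be crucial at the concluding step.

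The main step is to introduce the pathwise auxiliary function
\begin{equation*}
f(B;\omega) := -\tfrac{1}{2}(k-B)^2 - \lambda^*_S\bigl[Z_T(\omega)B - x\bigr] - \gamma^*_S\bigl[(C-B)_+ - \nu\bigr]
\end{equation*}
on $B\in(-\infty,k]$ and verify that $X_T^{\pi^*_S}(\omega)$ from \eqref{E_optiP} is the pointwise maximizer of $f(\cdot;\omega)$. The map $B\mapsto f(B;\omega)$ is strictly concave, since the quadratic term is strictly concave and, because $\gamma^*_S\geq 0$, the term $-\gamma^*_S(C-B)_+$ is concave. A branching analysis on the position of $k-\lambda^*_S Z_T(\omega)$ relative to $C$ and $C-\gamma^*_S$ reproduces the three pieces of \eqref{E_optiP}: on $B\geq C$ the shortfall term is inactive and the unconstrained vertex $k-\lambda^*_S Z_T(\omega)$ is admissible precisely when $k-\lambda^*_S Z_T(\omega)\geq C$; on $B<C$ the derivative is $k-B-\lambda^*_S Z_T(\omega)+\gamma^*_S$, so the quadratic has its vertex at $k-\lambda^*_S Z_T(\omega)+\gamma^*_S$, which lies in this half-line exactly when $k-\lambda^*_S Z_T(\omega)<C-\gamma^*_S$; for intermediate $k-\lambda^*_S Z_T(\omega)\in[C-\gamma^*_S,C]$ the one-sided derivatives at $B=C$ have opposite signs, so concavity forces the maximum to the kink $B=C$.

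Finally I would take expectations of the pointwise inequality $f(X_T^{\pi^*_S}(\omega);\omega)\geq f(X_T^\pi(\omega);\omega)$, valid for any admissible $\pi$, and substitute the two identities built into the candidate by \eqref{E_ESPandbudgetconstraints}, namely $\mathbb{E}[Z_T X_T^{\pi^*_S}]=x$ and $\mathbb{E}[(C-X_T^{\pi^*_S})_+]=\nu$. Combined with $\mathbb{E}[Z_T X_T^\pi]=x$ and the constraint $\mathbb{E}[(C-X_T^\pi)_+]\leq\nu$, this yields
\begin{equation*}
\mathbb{E}\bigl[-\tfrac{1}{2}(k-X_T^{\pi^*_S})^2\bigr] \geq \mathbb{E}\bigl[-\tfrac{1}{2}(k-X_T^\pi)^2\bigr] + \gamma^*_S\bigl(\nu-\mathbb{E}[(C-X_T^\pi)_+]\bigr) \geq \mathbb{E}\bigl[-\tfrac{1}{2}(k-X_T^\pi)^2\bigr],
\end{equation*}
where the last inequality uses $\gamma^*_S\geq 0$ and the constraint on $\pi$.

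The main obstacle I anticipate is the nonsmoothness of $B\mapsto (C-B)_+$ at $B=C$: the three-piece candidate really emerges from pointwise maximization only in the middle regime, where neither smooth piece has a vertex inside its own half-line and the maximum must be located at the kink; this is precisely where the sign $\gamma^*_S\geq 0$ becomes indispensable to ensure concavity of $f(\cdot;\omega)$ and hence that the kink is a maximum rather than a minimum.
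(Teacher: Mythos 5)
Your proposal is correct and follows essentially the same route as the paper's proof in Appendix \ref{app.proof.E}: the same pathwise Lagrangian $f(B;\omega)$ with multipliers $\lambda^*_S$ and $\gamma^*_S$, the same verification that \eqref{Bstar.S} maximises it, and the same final step of taking expectations and invoking \eqref{E_ESPandbudgetconstraints} together with $\gamma^*_S\geq 0$. The only (cosmetic) difference is that you locate the kink maximiser in the middle regime via strict concavity of $f(\cdot;\omega)$, whereas the paper compares the candidate values on the two half-lines $(-\infty,C]$ and $[C,k]$ directly.
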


\begin{proof}
The proof is similar to that of Theorem \ref{thm.optimal.U}. See Appendix \ref{app.proof.E} for details.
\end{proof}

\subsection{Expected shortfall constraint under $\mathbb{Q}$: $\mathbb{E}[Z_T(C-X_T^\pi)_+ ]\le \nu$} \label{S_ESQ}

In this section, we consider the problem of maximizing utility under a $\mathbb{Q}$-Expected Shortfall-type of solvency constraint where the expected shortfall of the terminal wealth relative to some given level must exceed a given tolerance $\mathbb{Q}$-expected shortfall. As in the previous section, the objective combines a utility maximization perspective with a risk measure tolerance, which is with respect to the $\mathbb{Q}$-expected shortfall probability where the shortfall is measured relative to some given level. 

The tolerance $\mathbb{Q}$-expected shortfall and the level from which shortfall is measured are ingredients in the objective. These are set internally as part of the economic management of the surplus. In contrast to the risk constraints in the previous sections, it is not realistic to have a solvency constraint formulated in terms of $\mathbb{Q}$-expectations. However, it makes perfect sense from an internal risk management point of view. Namely, the risk tolerance level corresponds to the value it costs to eliminate the risk. Conservative risk management means that the value of the retained loss is small. In particular, when communicating risk management processes to financial specialists, it may work well to communicate a risky position in terms of how much it costs to cover it. 

The problem is formalised by
\begin{equation}\label{Problem.Q}
    \max_{\pi\in\Pi}\mathbb{E}\left[-\frac{1}{2}\left(k-X_T^\pi\right)^2\right],
\end{equation}
subject to
\begin{equation}\label{Constraint.Q}
    \mathbb{E}\left[Z_T\left(C-X_T^\pi\right)_+\right]\leq \nu.
\end{equation}
with
\begin{equation}
    dX_t^\pi=(1-\pi_t)(bdt +\sigma dW_t),
    \quad X(0)=x\in[C,k].
\end{equation}
Thus, the parameters $C$ and $c$ specify the shortfall level and the $\mathbb{Q}$-Expected Shortfall tolerance level, respectively. The factor $Z_T$ in \eqref{Constraint.Q} formalizes that the expectation of the loss $(C-X_T^\pi)_+$ is taken under the risk-neutral measure $\mathbb{Q}$.

\subsubsection{Candidate}

We define the family of designs implicitly given by the terminal wealth
\begin{equation}
    X_T^{\pi_{\lambda,\delta}}=\begin{cases}
    k-{\lambda} Z_T(\omega),\quad &\text{if }Z_T(\omega)\leq \frac{k-C}{\lambda},\\
    C,\quad &\text{if }Z_T(\omega)\in\left[\frac{k-C}{\lambda},\frac{k-C}{\delta}\right],\\
    k-\delta Z_T(\omega),\quad &\text{if }Z_T(\omega)>\frac{k-C}{\delta},
\end{cases}
\end{equation}
characterised by the constants $\lambda>0$ and $\delta\in(0,\lambda]$, with $C\leq k$. Denote also $\Pi_\mathbb{Q}$ the family of designs, i.e.
\begin{equation}
    \label{PiQ}
    \Pi_\mathbb{Q} = \{ \pi_{\lambda,\delta}: 0 < \delta \leq \lambda \},
\end{equation}
and the value function
\begin{equation}
    \label{VQ}
    V_\mathbb{Q}(\lambda, \delta) = \mathbb{E}\left[Z_TX^{\pi_{\lambda, \delta}}\right].
\end{equation}
 It is also easy to see that $V_\mathbb{Q}$ is continuous with respect to its arguments (e.g. by direct computation via \eqref{app.help}).

	\begin{figure}
		\centering
		\includegraphics[width=.49\textwidth]{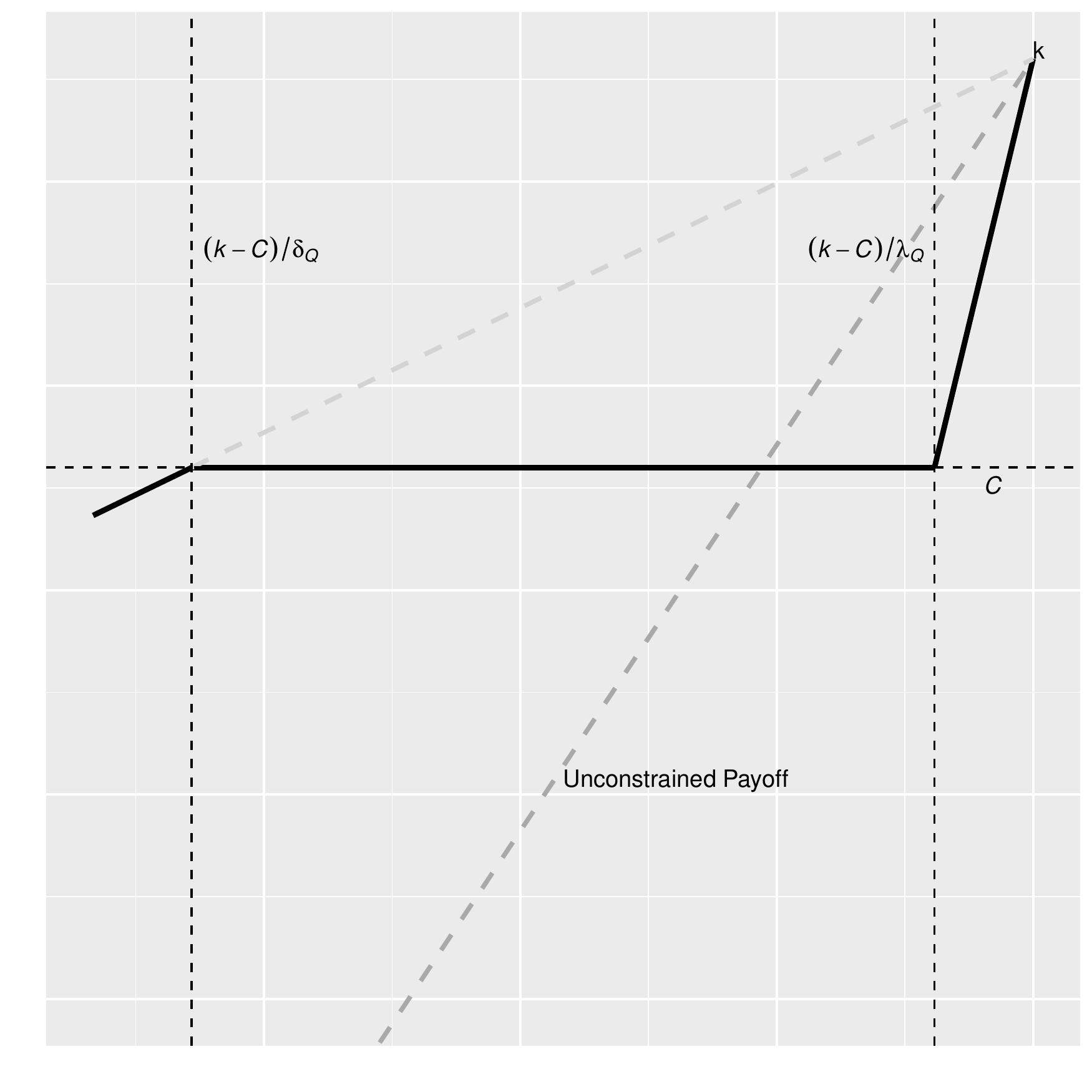}
		\caption{ES constraint under $\mathbb{Q}$}
		\label{F_ES_Q}
	\end{figure}
	
Figure \ref{F_ES_Q} is the $\mathbb{Q}$-ES-constraint version of Figures \ref{F_Strict}, \ref{F_VaR}, and \ref{F_ES_P}. Now, compared to Figure \ref{F_ES_P}, the slope from the lower kink of the solid line and downwards is different from the slope from the upper kink and upwards. It may look like we have a further degree of freedom (one more unknowns than equations) because the lower slope is not equal to the upper slope. However, the lower slope is now fixed by the point of the kink as the lower part of the payoff forms a straight line through $k$ (a light dashed grey line illustrates this). As before, the essential elements of the figure are determined by the constraints \eqref{E_ESQandbudgetconstraints}. The figure also illustrates that if the unconstrained payoff readily fulfils the ES-constraint (if \eqref{Constraint.Q} is not verified under $\pi_U^*$), then the unconstrained payoff also solves the $\mathbb{Q}$-ES-constraint problem.

\subsubsection{Existence}

We want to show the following:
\begin{theorem}\label{thm.existence.Q}
Suppose $\mathbb{E}\left[Z_T\left(C-X_T^{\pi_U^*}\right)_+\right]>\nu$. There exists $\lambda$ and $\delta$ with $0<\delta\leq \lambda$ such that 
$V_\mathbb{Q}(\lambda,\delta)=x$.
\end{theorem}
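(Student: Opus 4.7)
The plan is to exploit a structural feature of the candidate family that simplifies the problem considerably compared to the $\mathbb{P}$-expected shortfall case: the $\mathbb{Q}$-expected shortfall in \eqref{Constraint.Q} depends only on the parameter $\delta$ governing the lowest region of the payoff, and is independent of $\lambda$. Indeed, $(C - X_T^{\pi_{\lambda,\delta}})_+$ vanishes outside $\{Z_T > (k-C)/\delta\}$ and equals $\delta Z_T - (k-C)$ there, so
$$\mathbb{E}\!\left[Z_T\bigl(C - X_T^{\pi_{\lambda,\delta}}\bigr)_+\right] = \delta\, \mathbb{E}\!\left[Z_T(Z_T - h);\, Z_T > h\right], \quad h := (k-C)/\delta.$$
First, I would argue that this quantity is continuous in $\delta$, ranges from $0$ (as $\delta \downarrow 0$, equivalently $h \uparrow \infty$) up to $\infty$ (as $\delta \uparrow \infty$), and is strictly increasing in $\delta$ --- the latter by a direct derivative using the log-normality of $Z_T$. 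By the intermediate value theorem a unique $\delta^* > 0$ achieves equality with $\nu$. Applying the monotonicity to $\lambda_U^*$ (for which $\pi_{\lambda_U^*,\lambda_U^*}$ recovers the unconstrained $\Pi_U$ design $\pi_{\lambda_U^*}$) and invoking the standing hypothesis yields the key inequality $\delta^* < \lambda_U^*$.

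The second step is to fix $\delta = \delta^*$ and search over $\lambda \in [\delta^*, \infty)$. Because the candidate payoff is continuous in $Z_T$ at both kinks, the Leibniz boundary terms cancel when differentiating the integrals defining $V_\mathbb{Q}$, leaving
$$\frac{\partial V_\mathbb{Q}}{\partial \lambda}(\lambda, \delta^*) = -\mathbb{E}\!\left[Z_T^2;\, Z_T \leq (k-C)/\lambda\right] < 0,$$
so $V_\mathbb{Q}(\cdot, \delta^*)$ is strictly decreasing. At the left endpoint $\lambda = \delta^*$, the middle flat region collapses and the candidate coincides with the unconstrained design $\pi_{\delta^*} \in \Pi_U$, giving $V_\mathbb{Q}(\delta^*, \delta^*) = V_U(\delta^*) > V_U(\lambda_U^*) = x$, where the strict inequality uses $\delta^* < \lambda_U^*$ together with the strict monotonicity of $V_U$ from Theorem \ref{thm.existence.U}. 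Letting $\lambda \uparrow \infty$, the upper affine region shrinks to the null set $\{Z_T = 0\}$, and a short rearrangement of the three integrals --- together with the fact that $\delta^*$ saturates the $\mathbb{Q}$-ES constraint --- yields the clean limit $V_\mathbb{Q}(\lambda, \delta^*) \to C - \nu < C \leq x$. A final application of the intermediate value theorem produces the desired $\lambda^* \in (\delta^*, \infty)$ with $V_\mathbb{Q}(\lambda^*, \delta^*) = x$.

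The main obstacle is essentially bookkeeping: confirming that the two constraints really do decouple as described (so that the ES constraint alone pins down $\delta^*$), and then tracking the ordering $\delta^* < \lambda^*$ all the way to the right-endpoint limit. The standing hypothesis that $\pi_U^*$ violates the $\mathbb{Q}$-ES constraint is used in exactly one place --- to guarantee $\delta^* < \lambda_U^*$ --- and this, in turn, provides the strict inequality $V_\mathbb{Q}(\delta^*, \delta^*) > x$ at the left endpoint that is indispensable for the concluding intermediate value argument. The simplification relative to the $\mathbb{P}$-ES case (where one had to study $V_E(h_1(\lambda(h_2)), h_2)$ jointly) is precisely that the $\mathbb{Q}$-weighting of the loss eliminates the coupling between the two parameters.
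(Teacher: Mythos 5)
Your proposal is correct and follows essentially the same route as the paper's proof: both exploit the decoupling of the $\mathbb{Q}$-expected shortfall from $\lambda$ to pin down $\delta^*$ first (using the standing hypothesis to obtain $\delta^*<\lambda_U^*$), and then apply the intermediate value theorem in $\lambda$ between the endpoint values $V_U(\delta^*)>x$ and the limit $C-\nu<x$. The additional details you supply (the explicit formula for the shortfall, strict monotonicity of $V_\mathbb{Q}$ in $\lambda$) are consistent with, and slightly sharpen, the paper's argument.
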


\begin{proof}

From the definition of the design, we have that
\begin{equation}\label{shortfall.Q.equiv}
    \mathbb{E}\left[Z_T\left(C-X_T^{\pi_{\lambda, \delta}}\right)_+\right] = \mathbb{E}\left[Z_T\left(C-X_T^{\pi_\delta}\right)_+\right]
\end{equation}
and that such quantity as a function of $\delta$ is increasing.

Next, we show that there is a unique $\delta^*_\mathbb{Q}$ such that \eqref{shortfall.Q.equiv} equals $\nu$. When $\delta\downarrow 0$, it is easily seen from above that the shortfall quantity is $0$. In other words, we have $\lim_{\delta\downarrow 0}V_\mathbb{Q}(\lambda, \delta)\rightarrow V_C(\lambda)$.

On the other hand, it is clear that the value of $\lambda$ does not affect the shortfall, provided that it is greater than the chosen $\delta$ value. Hence, we use a 
$\lambda \geq \lambda^*_U$. When $\delta\uparrow\lambda$, the value function converges to that of the design $\pi_{\lambda}$. With the assumption $\lambda \geq \lambda_U^*$ and that $\pi_{\lambda_U^*}$ is not optimal, we have
$$
    \mathbb{E}\left[Z_T\left(C-X_T^{\pi_\lambda}\right)_+\right] \geq \mathbb{E}\left[Z_T\left(C-X_T^{\pi_{\lambda^*_U}}\right)_+\right] > \nu,
$$
which shows the existence of $\delta^*_\mathbb{Q}\in(0,\lambda^*_U)$.

Thus, it remains to determine $\lambda$. Since $\delta^*_\mathbb{Q}<\lambda^*_U$, we have
$$
    V_\mathbb{Q}(\delta_\mathbb{Q}^*,\delta_\mathbb{Q}^*) = V_U(\delta_\mathbb{Q}^*) >
    V_U(\lambda_U^*) = x.
$$
On the other hand, notice that
$$
    \lambda \mathbb{E}\left[Z_T^2;Z_T\leq \frac{k-C}{\lambda}\right] =
        \lambda e^{\beta^2T}\Phi\left(\frac{\frac{3}{2}\beta^2T - log(\frac{k-C}{\lambda})}{\beta\sqrt{T}}\right)\\
    = e^{\beta^2T} \lambda\Phi\left(\frac{\frac{3}{2}\beta^2T-\log(k-C)}{\beta\sqrt{T}}+\frac{log(\lambda)}{\beta\sqrt{T}}\right)
$$
is of the form $A\lambda\Phi(a\log(\lambda)+b)$ (with $a<0$), which goes to $0$ when $\lambda\uparrow\infty$.
Recall also the definition of $\delta_\mathbb{Q}^*$:
$$
   \nu = \mathbb{E}\left[Z_T\left(C-\left(k-\delta_\mathbb{Q}^* Z_T\right)\right);Z_T>\frac{k-C}{\delta_\mathbb{Q}^*}\right]
$$
Therefore, when $\lambda\uparrow\infty$, we have
\begin{align*}
    V_\mathbb{Q}(\lambda, \delta^*_\mathbb{Q})
    \rightarrow~& 
        \mathbb{E}\left[Z_T(C);Z_T\leq \frac{k-C}{\delta^*_\mathbb{Q}}\right]
        + \mathbb{E}\left[Z_T\left(k-\delta^*_\mathbb{Q} Z_T\right);Z_T > \frac{k-C}{\delta^*_\mathbb{Q}}\right] \\
    =~&    \mathbb{E}\left[Z_T(C)\right]
        + \mathbb{E}\left[Z_T\left(k-\delta^*_\mathbb{Q} Z_T-C\right);Z_T > \frac{k-C}{\delta^*_\mathbb{Q}}\right] \\
    =~& C-\nu < x.
\end{align*}
Along with the usual continuity argument, existence is established.
\end{proof}

In other words, supposing $\pi_U^*$ is not optimal, we have constructed a design \begin{equation}
    \pi^*_\mathbb{Q}:=\pi_{\lambda^*_\mathbb{Q},\delta^*_\mathbb{Q}}, 
\end{equation}
which generates a terminal wealth
\begin{equation} \label{E_optiQ}
    X_T^{\pi^*_\mathbb{Q}}=\begin{cases}
    k-{\lambda^*_\mathbb{Q}} Z_T(\omega),\quad &\text{if }Z_T(\omega)\leq \frac{k-C}{\lambda^*_\mathbb{Q}},\\
    C,\quad &\text{if }Z_T(\omega)\in\left[\frac{k-C}{\lambda^*_\mathbb{Q}},\frac{k-C}{\delta^*_\mathbb{Q}}\right],\\
    k-\delta^*_\mathbb{Q} Z_T(\omega),\quad &\text{if }Z_T(\omega)>\frac{k-C}{\delta^*_\mathbb{Q}},
\end{cases}
\end{equation}
with
\begin{equation} \label{E_ESQandbudgetconstraints}
    \begin{cases}
        &\mathbb{E}\left[Z_T\left(C-X_T^{\pi^*_\mathbb{Q}}\right)_+\right]=\nu,\\
        &\mathbb{E}\left[Z_T X_T^{\pi^*_\mathbb{Q}}\right]=x.
    \end{cases}
\end{equation}

\subsubsection{Optimality}

\begin{theorem}
\label{thm.optimal.Q}
The design $\pi_{\lambda^*_\mathbb{Q},\delta^*_\mathbb{Q}}$ is optimal with respect to 
\eqref{Problem.Q}, subject to \eqref{Constraint.Q}.
\end{theorem}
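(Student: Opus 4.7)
The plan is to adapt the pathwise Lagrangian argument of Theorem~\ref{thm.optimal.U}, except that two binding constraints are now in play at the candidate design $\pi^*_{\mathbb{Q}}$: the budget constraint $\mathbb{E}[Z_T X_T^{\pi^*_\mathbb{Q}}]=x$ and the shortfall constraint $\mathbb{E}[Z_T(C-X_T^{\pi^*_\mathbb{Q}})_+]=\nu$, both established in \eqref{E_ESQandbudgetconstraints}. Accordingly, I would introduce two non-negative Lagrange multipliers: $\lambda^*_{\mathbb{Q}}$ for the budget constraint and $\mu^*:=\lambda^*_{\mathbb{Q}}-\delta^*_{\mathbb{Q}}\geq 0$ for the shortfall constraint, the sign being guaranteed by the structural restriction $\delta^*_{\mathbb{Q}}\leq \lambda^*_{\mathbb{Q}}$ built into $\Pi_{\mathbb{Q}}$ in \eqref{PiQ}.

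For each $\omega\in\Omega$, I would analyse pathwise the auxiliary function
$$
f(B;\omega):=-\tfrac{1}{2}(k-B)^2-\lambda^*_{\mathbb{Q}}\bigl[Z_T(\omega)B-x\bigr]-\mu^*\bigl[Z_T(\omega)(C-B)_+-\nu\bigr]
$$
over $B\in(-\infty,k]$. The function is strictly concave, but its kink at $B=C$ prevents a single-line symmetry argument as in Theorem~\ref{thm.optimal.U}. I would instead split into the two smooth regimes. On $\{B\geq C\}$, the first-order condition yields $B=k-\lambda^*_{\mathbb{Q}}Z_T(\omega)$, which lies in $[C,k]$ iff $Z_T(\omega)\leq (k-C)/\lambda^*_{\mathbb{Q}}$. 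On $\{B\leq C\}$, the first-order condition yields $B=k-(\lambda^*_{\mathbb{Q}}-\mu^*)Z_T(\omega)=k-\delta^*_{\mathbb{Q}}Z_T(\omega)$, which lies in $(-\infty,C]$ iff $Z_T(\omega)\geq (k-C)/\delta^*_{\mathbb{Q}}$. On the intermediate range of $Z_T(\omega)$, both interior candidates fall into the wrong half-line, so by strict concavity the pathwise maximum must be attained at the kink $B=C$. In all three regimes the pathwise maximizer coincides exactly with the corresponding branch of $X_T^{\pi^*_{\mathbb{Q}}}(\omega)$ in \eqref{E_optiQ}, giving
$$
f\bigl(X_T^{\pi^*_{\mathbb{Q}}}(\omega);\omega\bigr)\geq f\bigl(X_T^{\pi}(\omega);\omega\bigr)\qquad\text{for every }\pi\in\Pi.
$$

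To conclude, I would take expectations on both sides. On the candidate side, both Lagrangian penalty terms vanish identically by \eqref{E_ESQandbudgetconstraints}. On the side of an arbitrary admissible $\pi\in\Pi$ satisfying \eqref{Constraint.Q}, the budget term still vanishes because $X^{\pi}Z$ is a true (not merely local) martingale under the boundedness built into $\Pi$, so $\mathbb{E}[Z_T X_T^\pi]=x$, while the shortfall term is non-negative thanks to $\mu^*\geq 0$ together with $\mathbb{E}[Z_T(C-X_T^\pi)_+]\leq \nu$. Thus the pathwise inequality upgrades to
$$
\mathbb{E}\!\left[-\tfrac{1}{2}\!\left(k-X_T^{\pi^*_{\mathbb{Q}}}\right)^{\!2}\right]\geq \mathbb{E}\!\left[-\tfrac{1}{2}\!\left(k-X_T^\pi\right)^{\!2}\right],
$$
which is the desired optimality.

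The main obstacle I expect is the non-differentiability of $B\mapsto (C-B)_+$ at $B=C$: the standard axis-of-symmetry argument of Theorem~\ref{thm.optimal.U} does not apply directly, and I must verify the kink case by comparing one-sided derivatives (equivalently, using strict concavity to rule out any interior maximizer in the middle regime). A subtler point is that the two multipliers $\lambda^*_{\mathbb{Q}}$ and $\mu^*$ are not independent degrees of freedom but are tied together precisely by the definitions of the thresholds $(k-C)/\lambda^*_{\mathbb{Q}}$ and $(k-C)/\delta^*_{\mathbb{Q}}$ appearing in \eqref{E_optiQ}, so the geometric alignment of the two half-line maximizers with the three branches of $X_T^{\pi^*_{\mathbb{Q}}}$ is automatic once the existence result (Theorem~\ref{thm.existence.Q}) is invoked.
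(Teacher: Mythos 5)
Your proposal is correct and follows essentially the same route as the paper: the appendix proof of Theorem \ref{thm.optimal.Q} uses exactly the Lagrangian $f(B,\omega)=-\tfrac{1}{2}(k-B)^2-\lambda^*_{\mathbb{Q}}[Z_T(\omega)B-x]-(\lambda^*_{\mathbb{Q}}-\delta^*_{\mathbb{Q}})[Z_T(\omega)(C-B)1(C\geq B)-\nu]$, splits $(-\infty,k]$ at $B=C$, verifies pathwise that the three branches of \eqref{E_optiQ} maximise $f(\cdot,\omega)$, and then takes expectations using \eqref{E_ESQandbudgetconstraints} and the sign of $\lambda^*_{\mathbb{Q}}-\delta^*_{\mathbb{Q}}$. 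Your handling of the kink via one-sided first-order conditions and strict concavity is just a slightly more explicit phrasing of the paper's case-by-case comparison of the restricted maximisers on $[C,k]$ and $(-\infty,C]$.
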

\begin{proof}
The proof is similar to that of Theorem \ref{thm.optimal.U}. See Appendix \ref{app.proof.Q} for details.
\end{proof}

\section{Numerical illustrations} \label{S_NI}

In this section, we illustrate the path of the controlled $X_t^{\pi^*}$ for the different constraints considered in this paper, as well as its associated optimal $\pi_t^*$. 

The \verb_R_ code for computing the parameters in the optimal designs based on the basic parameters, payoff graphs, and paths for given seed are all available on GitHub at the following address: \url{https://github.com/agi-lab/LQ-reinsurance-under-constraints}.

\subsection{Parameters}

The initial hyperparameters are 
\begin{equation}
a=0.2, \quad b=0.5, \quad \sigma=1.2, \;\text{ and }x = 2
\end{equation}
for the surplus dynamics, and 
\begin{equation}
\tilde{k}= 5, \quad \tilde{C} = 0, \quad \epsilon =0.01, \;\text{ and }\nu = 0.1
\end{equation}
for the solvency constraints, with timeframe $T=5$.

This leads to the following parameters in the optimal designs:
\begin{eqnarray}\label{E_optipara}
\lambda_U^* = 1.888951 \text{ (unconstrained)}  \label{E_NI_para1} \\
\lambda_C^* = 5.828629 \text{ (strict constraint)} \label{E_NI_para2} \\
\left\{\begin{array}{c}
\lambda_P^* = 2.159931  \\
c^* = -5.725147 
\end{array}\right.
\text{ (probability (VaR) constraint)} \label{E_NI_para3} \\
\left\{\begin{array}{c}
\lambda_S^* = 2.472898 \\
\gamma_S^* = 6.201261 
\end{array}\right.
\text{ (expected shortfall under }\mathbb{P}\text{)} \label{E_NI_para4}
\\
\left\{\begin{array}{c}
\lambda_\mathbb{Q}^* = 5.199066  \\
\delta_\mathbb{Q}^* = 0.6094314 
\end{array}\right.
\text{ (expected shortfall under }\mathbb{\mathbb{Q}}\text{)} \label{E_NI_para5}
\end{eqnarray}

These parameters are the ones that were used to produce Figures \ref{F_Strict}--\ref{F_ES_Q}. The shapes of the corresponding optimal terminal payoffs are further compared in the following subsection. 

\subsection{Comparison of the optimal terminal payoffs} \label{S_terminalpayoff}

Comparison of the Strict constraint payoff \eqref{st.C} and $\mathbb{Q}$-ES constraint payoff \eqref{E_optiQ} with their optimal parameters \eqref{E_NI_para2} and \eqref{E_NI_para5} respectively, suggests that those are quite similar. Analogously, comparison of Unconstrained payoff \eqref{st.U}, VaR constraint payoff \eqref{st.P} and $\mathbb{P}$-ES constraint payoff \eqref{E_optiP} with their respective optimal parameters \eqref{E_NI_para1}, \eqref{E_NI_para3}, and \eqref{E_NI_para4} also shows a similar shape, especially for lower values of $Z_T$. These similarities are illustrated in Figure \ref{F_optishapes}.

\begin{figure}[!htb]
	\centering	
	
	\begin{subfigure}{0.49\textwidth}
		\centering
		\includegraphics[width=\textwidth]{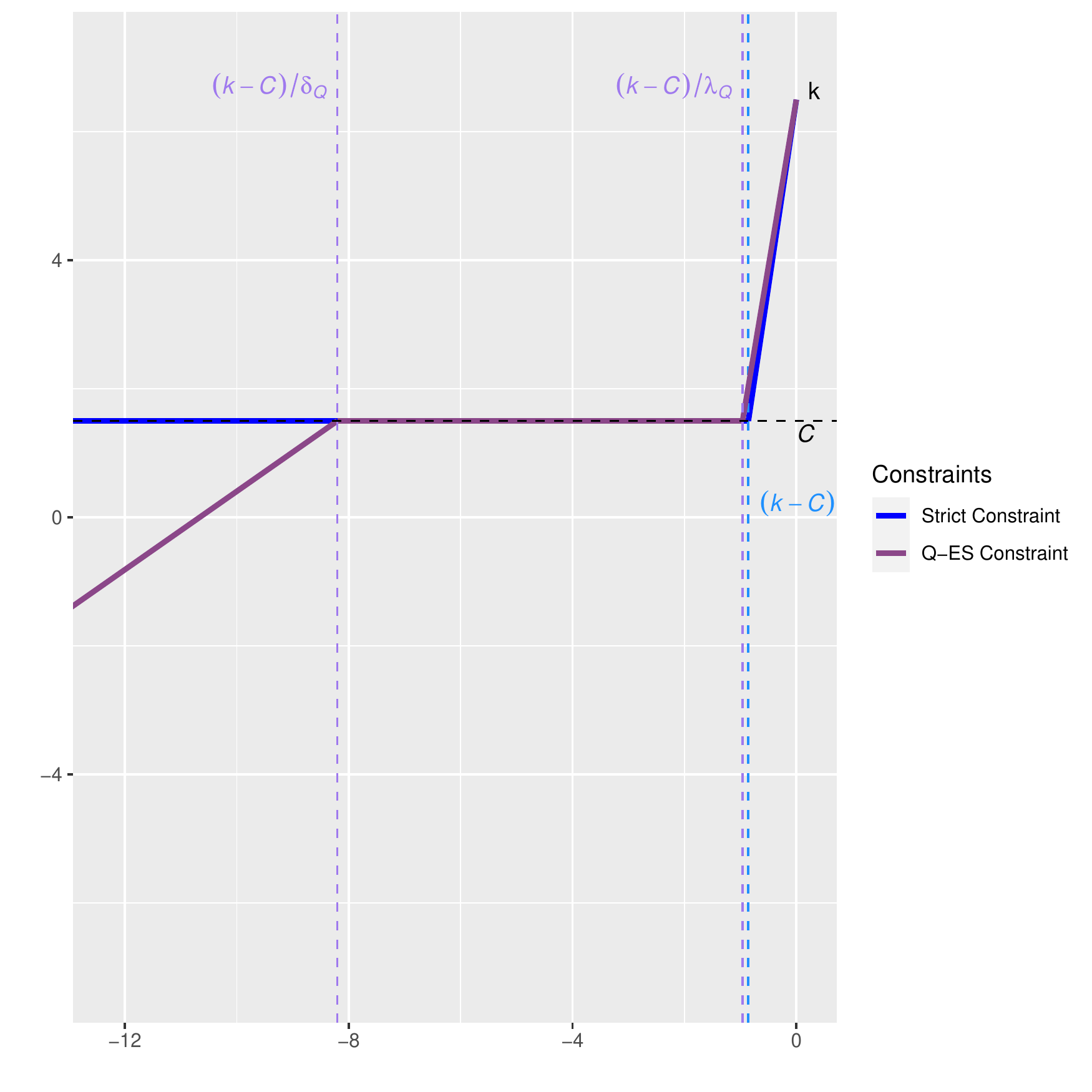}
		\caption{Strict constraint with ES constraint under $\mathbb{Q}$}
		\label{F_NI_Strict} 
	\end{subfigure}
	\begin{subfigure}{0.49\textwidth}
		\centering
		\includegraphics[width=\textwidth]{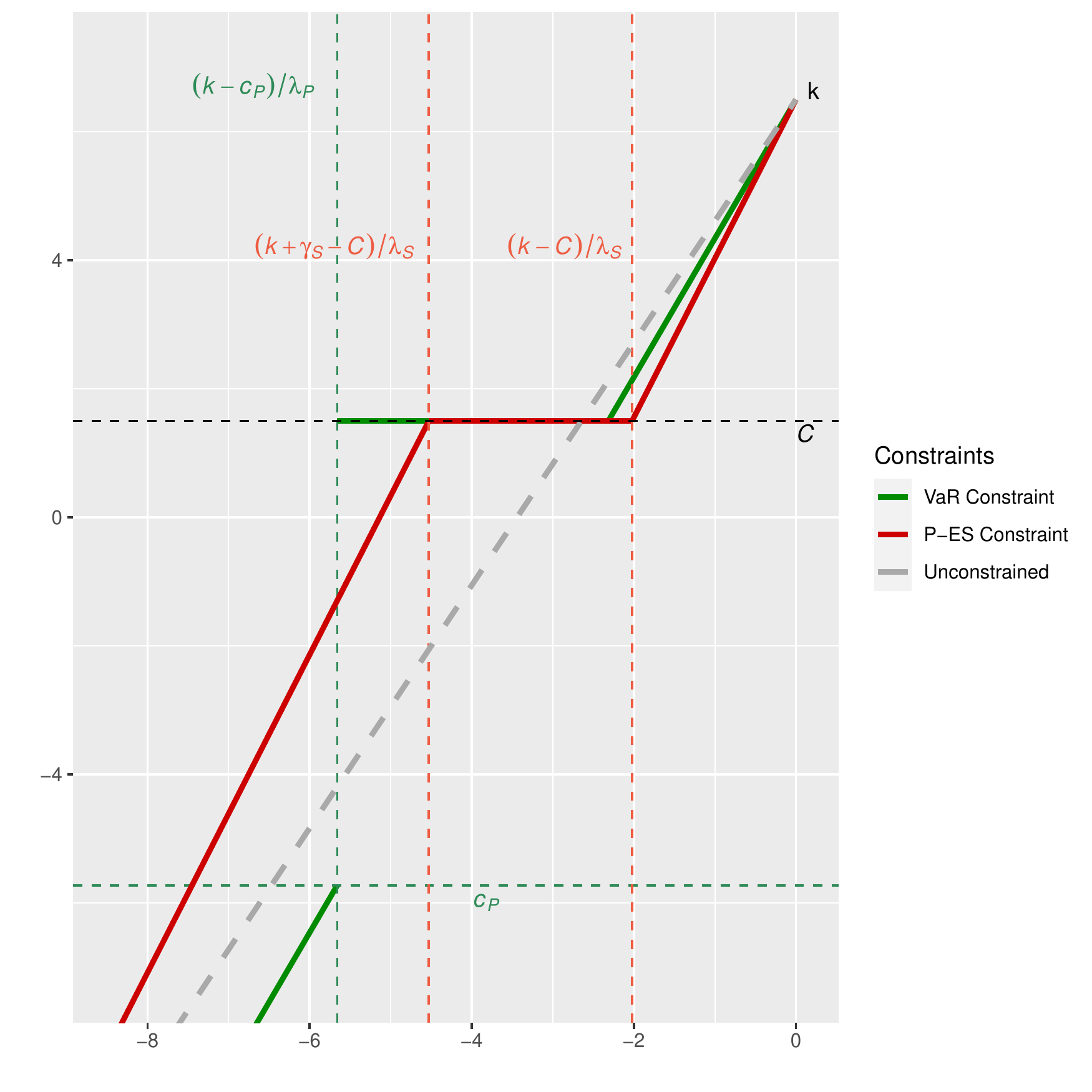}
		\caption{VaR constraint with ES constraint under $\mathbb{P}$}
		\label{F_NI_VaR}
	\end{subfigure}
	\caption{Comparison of the optimal terminal wealth shapes under optimal parameters \eqref{E_optipara} and different solvency constraints}
	\label{F_optishapes}
\end{figure}

Figures \ref{F_NI_Strict} and \ref{F_NI_VaR} show the payoff in the numerical example and are, thus, the numerical example version of the stylized payoff illustrated in Figures \ref{F_Strict}-\ref{F_ES_Q}. Panel (a) shows that the $\mathbb{Q}$-ES  constraint design and the strict constraint design have similar payoffs unless the realized unconstrained design is really low. Only then does the tolerance for a loss in the $\mathbb{Q}$-ES constraint design kick in. Conversely, Panel (b) shows that the $\mathbb{P}$-ES constraint design and the VaR constraint design have similar payoffs, again at least if the realised unconstrained design is not too low.

\begin{figure}[htb]
\centering
\includegraphics[width=.7\textwidth]{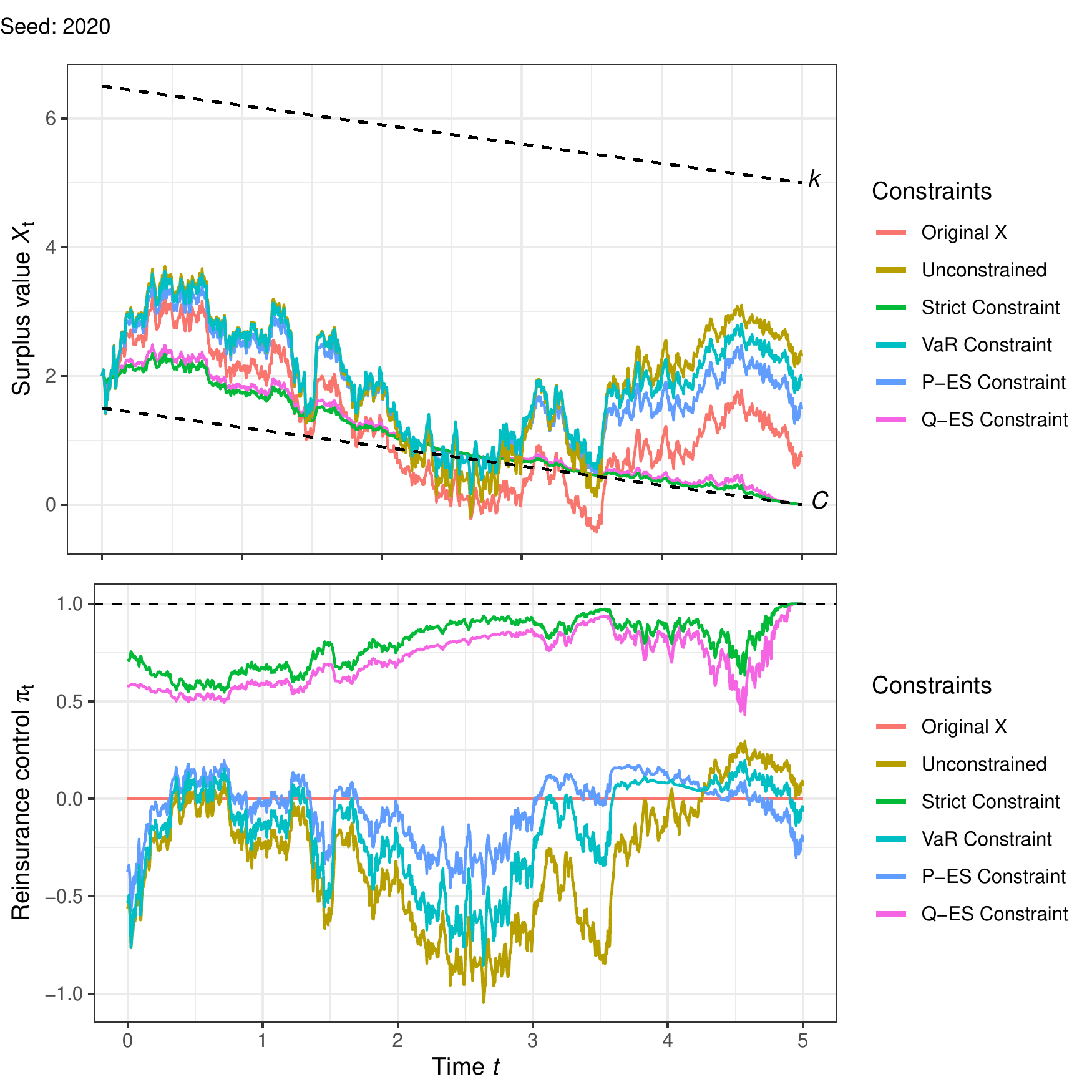}
\caption{Traces of the controlled $X_t^{\pi^*}$ and associated $\pi_t^*$ (seed 2020)}
\label{F_paths_2020}
\end{figure}

\subsection{Traces of the controlled $X_t^{\pi^*}$ and the associated $\pi_t^*$}

Here, the controlled path $X_t^{\pi^*}$ and associated reinsurance design $\pi_t^*$ are simulated by discretising $[0,T]$ over 1000 intervals, using different seeds. We compare the uncontrolled surplus $X_t$ (salmon line), with its controlled version under all five constraints considered in this paper. The paths after application of the optimal design are calculated according to Appendices \ref{A_paths} and \ref{ss.comp.pit}, respectively.

Figure \ref{F_paths_2020} considers one set of traces. First, note the uninsured surplus process (salmon). This is the surplus if the insurer does not reinsure to optimize performance \emph{and} there are no constraints on the surplus process. The surplus of an insurer who optimizes performance but has no constraints (gold) performs relatively well and beats the performance of all processes. For such a trace with relatively good performance, the constraints are burdens that penalise performance. When the trace is unfavourable, the constraints (and associated protection) help outperform the unconstrained design.

After the unconstrained surplus follows, in terms of performance, the VaR constraint (turquoise) and the $\mathbb{P}$-ES constraint processes (blue). These two constraints have relatively little effect on the final outcome. Also, they all perform better than the uninsured and uncontrolled surplus, but they are not radically different. Examination of the corresponding optimal reinsurance designs (turquoise, blue, and gold) confirms those comments: they are quite similar and circulate around 0.

The $\mathbb{Q}$-ES constraint (pink) and the Strict Constraint (green) traces perform alike and relatively badly. We see in the reinsurance designs that for these two realisations much of the business is sold off: reinsurance levels are above 0.5 and tending to 1. These designs  have little reward in this particular realisation because the surplus increases towards the end of the considered time frame. The surplus ends up for both much lower than the other constraints and ends up close to the tolerance level $C$. In more favourable endings (see, e.g., seed 1994), the reinsurance proportion becomes negative to ``regain'' some profits.

\begin{figure}[htb]
\centering
\includegraphics[width=.7\textwidth]{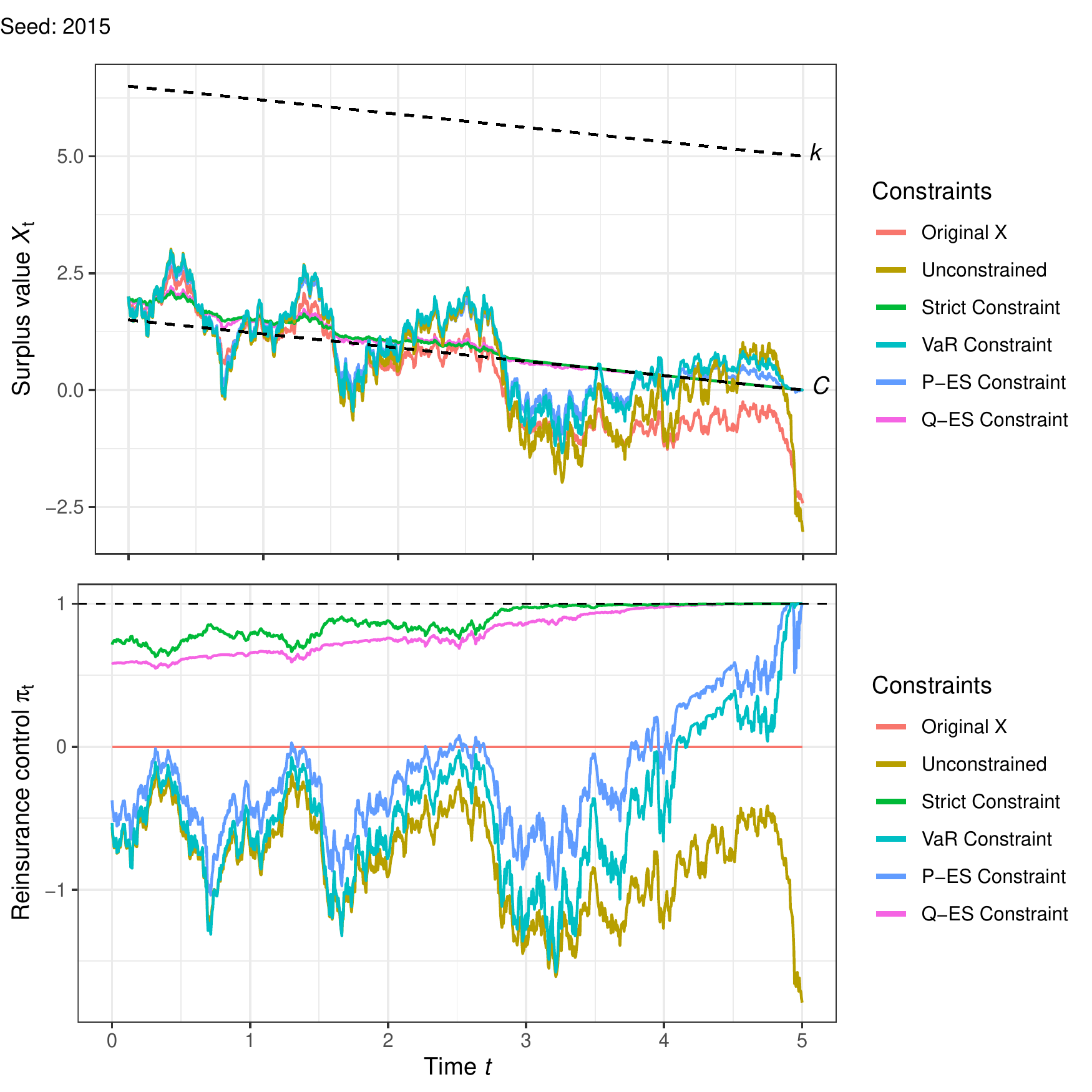}
\caption{Traces of the controlled $X_t^{\pi^*}$ and associated $\pi_t^*$ (seed 2015)}
\label{F_paths_2015}
\end{figure}

The clustering of the realised surplus and design processes discussed above was expected from the observations outlined in the previous subsection \ref{S_terminalpayoff}. Choosing a seed of 2 demonstrates this to an extreme level (this is not shown here, but could be generated from the code provided on GitHub). The clustering occurs and is illustrated when the unconstrained surplus performs relatively well, and downward protection is essentially a waste of money. We can expect less clustering for surpluses that perform poorly. 

Figure \ref{F_paths_2015} for seed 2015 illustrates a surplus path that finishes badly. In this case, the performance of the uncontrolled (salmon) and the controlled but unconstrained (gold) processes is bad, and all the constraint designs diverge substantially from the unconstrained ones. There is still some extent of clustering of the optimal designs of the VaR (turquoise) and $\mathbb{P}$-ES (blue) constraints and between the designs of the $\mathbb{Q}$-ES (pink) and the Strict (green) constraints. Due to their (early) focus on protecting the level $C$, the constrained processes perform well and, in this case, investing in protection paid off.

\section{Conclusion} \label{S_C}

In this paper, we considered the surplus of an insurance company endowed with an initial level of risk. This risk is approximated with a Brownian motion. The insurance company can transfer some of its risk at a premium (non-cheap reinsurance). At the same premium, it can also take on even more risk and act as the reinsurer, which is optimal in some cases. This decision minimises the quadratic distance between the terminal value at a given deterministic time frame and an exogenous constant. This linear-quadratic (LQ) formulation is directly connected to a mean-variance optimisation criterion.

We considered five different constraints in the optimisation, which directly focus on solvency, and which are of direct relevance in the insurance context: (i) no constraint (as a benchmark), (ii) a strict solvency constraint that forces the surplus to terminate above an arbitrary level, (iii) a ``Value at Risk'' constraint that requires the former event to occur with minimum probability (as opposed to certainty), (iv) an ``Expected Shortfall'' constraint that requires the expected value of the shortfall (for the terminal value) under the solvency level, under the objective probabilities $\mathbb{P}$-measure, to be less than an arbitrary constant, and (v) the same as (iv), although under the risk-free probabilities $\mathbb{Q}$-measure.

The paper's main contributions are to determine the optimal terminal payoff under all five constraints, from which the optimal reinsurance designs follow. To the best of our knowledge, the last two constraints were never considered before in an insurance context, even though they are directly relevant to several regulatory frameworks worldwide. They have also been shown to be better risk measures in several ways. Furthermore, while we borrow our approach from the finance literature, our problems differ in several ways (for instance, our surplus can become negative), our results are materially different, and comparisons are of great interest.

Possible extensions of our work include considering an exponential utility function in the objective, including time-dependent parameters (such as a reinsurance premium that follows soft and hard cycles), and adding a formal constraint for the reinsurance design to be between zero and one. Clarifying the validity of the structure of the designs without approximating the risk process by a Brownian motion is the most interesting (and challenging) one, both from a mathematical and contextual point of view.

\section*{Acknowledgments}

This paper was presented at UNSW (Sydney, Australia) in November 2021, at Monash University (Melbourne, Australia) in March 2022, at the ASTIN Colloquium (online) in June 2022, and at the 25th International Congress on Insurance: Mathematics and Economics (online) in July 2022.  
The authors are grateful for constructive comments from colleagues who attended those events.

Benjamin Avanzi acknowledges support under Australian Research Council's Discovery Project (DP200101859) funding scheme.  
The views expressed herein are those of the authors and are not necessarily those of the supporting organisations. 

\section*{References}

\bibliographystyle{elsarticle-harv}
\bibliography{libraries}

\begin{thebibliography}{39}
\expandafter\ifx\csname natexlab\endcsname\relax\def\natexlab#1{#1}\fi
\expandafter\ifx\csname url\endcsname\relax
  \def\url#1{\texttt{#1}}\fi
\expandafter\ifx\csname urlprefix\endcsname\relax\def\urlprefix{URL }\fi

\bibitem[{Albrecher et~al.(2017)Albrecher, Beirlant, and Teugels}]{AlBeTe17}
Albrecher, H., Beirlant, J., Teugels, J.~L., 2017. Reinsurance: Actuarial and
  Statistical Aspects. John Wiley \& Sons, Ch.~8.

\bibitem[{Albrecher and Thonhauser(2009)}]{AlTh09}
Albrecher, H., Thonhauser, S., 2009. Optimality results for dividend problems
  in insurance. RACSAM Revista de la Real Academia de Ciencias; Serie A,
  Mathem{\'a}ticas 100~(2), 295--320.

\bibitem[{Arrow(1963)}]{Arr63}
Arrow, K.~J., 1963. {Uncertainty and the Welfare Economics of Medical Care}.
  The American Economic Review 53~(5), 941--973.

\bibitem[{Asimit and Boonen(2018)}]{AsBo18}
Asimit, V., Boonen, T.~J., 2018. Insurance with multiple insurers: A
  game-theoretic approach. European Journal of Operational Research 267~(2),
  778--790.

\bibitem[{Asmussen and Albrecher(2010)}]{AsAl10}
Asmussen, S., Albrecher, H., 2010. Ruin Probabilities, 2nd Edition. Vol.~14 of
  Advanced Series on Statistical Science and Applied Probability. World
  Scientific Singapore.

\bibitem[{Avanzi(2009)}]{Ava09}
Avanzi, B., 2009. Strategies for dividend distribution: {A} review. North
  American Actuarial Journal 13~(2), 217--251.

\bibitem[{Bai et~al.(2022)Bai, Zhou, Xiao, Gao, and Zhong}]{BaZhXiGaZh22}
Bai, Y., Zhou, Z., Xiao, H., Gao, R., Zhong, F., 2022. A hybrid stochastic
  differential reinsurance and investment game with bounded memory. European
  Journal of Operational Research 296~(2), 717--737.

\bibitem[{Balb{\'a}s et~al.(2021)Balb{\'a}s, Balb{\'a}s, and
  Balb{\'a}s}]{BaBaBa21}
Balb{\'a}s, A., Balb{\'a}s, B., Balb{\'a}s, R., 2021. Omega ratio optimization
  with actuarial and financial applications. European Journal of Operational
  Research 292~(1), 376--387.

\bibitem[{Balb{\'a}s et~al.(2009)Balb{\'a}s, Balb{\'a}s, and Heras}]{BaBaHe09}
Balb{\'a}s, A., Balb{\'a}s, B., Heras, A., 2009. Optimal reinsurance with
  general risk measures. Insurance: Mathematics and Economics 44~(3), 374--384.

\bibitem[{Balb{\'a}s et~al.(2011)Balb{\'a}s, Balb{\'a}s, and Heras}]{BaBaHe11}
Balb{\'a}s, A., Balb{\'a}s, B., Heras, A., 2011. Stable solutions for optimal
  reinsurance problems involving risk measures. European Journal of Operational
  Research 214~(3), 796--804.

\bibitem[{Basak and Shapiro(2001)}]{BaSh01}
Basak, S., Shapiro, A., 2001. Value-at-risk-based risk management: Optimal
  policies and asset prices. The Review of Financial Studies 14~(2), 371--405.

\bibitem[{B{\"a}uerle(2004{\natexlab{a}})}]{Bau04}
B{\"a}uerle, N., 2004{\natexlab{a}}. Approximation of optimal reinsurance and
  dividend payout policies. Mathematical Finance 14~(1), 99--113.

\bibitem[{B{\"a}uerle(2004{\natexlab{b}})}]{Bau04b}
B{\"a}uerle, N., 2004{\natexlab{b}}. Traditional versus non-traditional
  reinsurance in a dynamic setting. Scandinavian Actuarial Journal 2004~(5),
  355--371.

\bibitem[{B{\"u}hlmann(1970)}]{Buh70}
B{\"u}hlmann, H., 1970. Mathematical Methods in Risk Theory. Grundlehren der
  mathematischen Wissenschaften. Springer-Verlag, Berlin, Heidelberg, New York.

\bibitem[{Cai and Chi(2020)}]{CaCh20}
Cai, J., Chi, Y., 2020. Optimal reinsurance designs based on risk measures: a
  review. Statistical Theory and Related Fields 4~(1), 1--13.

\bibitem[{Cao et~al.(2020)Cao, Landriault, and Li}]{CaLaLi20}
Cao, J., Landriault, D., Li, B., 2020. Optimal reinsurance-investment strategy
  for a dynamic contagion claim model. Insurance: Mathematics and Economics 93,
  206--215.

\bibitem[{Cram{\'e}r(1955)}]{Cra55}
Cram{\'e}r, H., 1955. Collective Risk Theory: A Survey of the Theory from the
  Point of View of the Theory of Stochastic Processes. Ab Nordiska Bokhandeln,
  Stockholm.

\bibitem[{de~Finetti(1957)}]{deF57}
de~Finetti, B., 1957. Su un'impostazione alternativa della teoria collettiva
  del rischio. Transactions of the XVth International Congress of Actuaries 2,
  433--443.

\bibitem[{Eckert and Gatzert(2018)}]{EcGa18}
Eckert, J., Gatzert, N., 2018. Risk- and value-based management for non-life
  insurers under solvency constraints. European Journal of Operational Research
  266~(2), 761 -- 774.

\bibitem[{Fahrenwaldt and Sun(2020)}]{FaSu20}
Fahrenwaldt, M., Sun, C., 2020. Expected utility approximation and portfolio
  optimisation. Insurance: Mathematics and Economics 93, 301--314.

\bibitem[{Gerber(1972)}]{Ger72}
Gerber, H.~U., 1972. Games of economic survival with discrete- and
  continuous-income processes. Operations Research 20~(1), 37--45.

\bibitem[{Gu et~al.(2021)Gu, Steffensen, and Zheng}]{GuStZh21}
Gu, J.-W., Steffensen, M., Zheng, H., 2021. A note on $\mathcal{P}$- vs.
  $\mathcal{Q}$-expected loss portfolio constraints. Quantitative Finance
  21~(2), 263--270.

\bibitem[{Guan et~al.(2022)Guan, Xu, and Zhou}]{GuXuZh22}
Guan, C., Xu, Z.~Q., Zhou, R., 2022. Dynamic optimal reinsurance and dividend
  payout in finite time horizon. Mathematics of Operations Research 48~(1),
  544--568.

\bibitem[{Karatzas et~al.(1986)Karatzas, Lehoczky, Sethi, and
  Shreve}]{KaLeSeSh86}
Karatzas, I., Lehoczky, J., Sethi, S., Shreve, S., 1986. Explicit solution of a
  general consumption/investment problem. In: N., C., K., H., M., K. (Eds.),
  Stochastic Differential Systems. Lecture Notes in Control and Information
  Sciences. Vol.~78. Springer, Berlin, Heidelberg.

\bibitem[{Korn(1997)}]{Kor97}
Korn, R., 1997. Optimal Porfolios: Stochastic Models for Optimal Investment and
  Risk Management in Continuous Time. World Scientific Publishing Company.

\bibitem[{Korn(2005)}]{Kor05}
Korn, R., 2005. Optimal portfolios with a positive lower bound on final wealth.
  Quantitative Finance 5~(3), 315--321.

\bibitem[{Korn and Wiese(2008)}]{KoWi08}
Korn, R., Wiese, A., 2008. Optimal investment and bounded ruin probability:
  Constant portfolio strategies and mean-variance analysis. {ASTIN} Bulletin
  38~(2), 423--440.

\bibitem[{Kraft and Steffensen(2013)}]{KrSt13}
Kraft, H., Steffensen, M., 2013. A dynamic programming approach to constrained
  portfolios. {E}uropean {J}ournal of {O}perational {R}esearch 229~(2),
  453--461.

\bibitem[{Li et~al.(2017)Li, Li, and Young}]{LiLiYo17}
Li, D., Li, D., Young, V.~R., 2017. Optimality of excess-loss reinsurance under
  a mean--variance criterion. Insurance: Mathematics and Economics 75, 82--89.

\bibitem[{Lundberg(1909)}]{Lun09}
Lundberg, F., 1909. {\"U}ber die {T}heorie der {R}{\"u}ckversicherung.
  Transactions of the VIth International Congress of Actuaries 1, 877--948.

\bibitem[{Merton(1969)}]{Mer69}
Merton, R.~C., 1969. Lifetime portfolio selection under uncertainty: the
  continuous-time case. The Review of Economics and Statistics 51~(3),
  247--257.

\bibitem[{Nordfang and Steffensen(2017)}]{NoSt17}
Nordfang, M.-B., Steffensen, M., 2017. Approximations to expected utility
  optimization in continuous time. Available at SSRN: doi 10.2139/ssrn.2945313.

\bibitem[{{\O}ksendal and Sulem(2010)}]{OkSu10}
{\O}ksendal, B., Sulem, A., 2010. Applied stochastic control of jump
  diffusions, 2nd Edition. Springer, Berlin.

\bibitem[{Redington(1952)}]{Red52}
Redington, F.~M., 1952. Review of the principles of life-office valuations.
  Journal of the Institute of Actuaries 78~(3), 286--340.

\bibitem[{Reichel et~al.(2021)Reichel, Schmeiser, and Schreiber}]{ReScSc21}
Reichel, L., Schmeiser, H., Schreiber, F., 2021. Sometimes more, sometimes
  less: Prudence and the diversification of risky insurance coverage. European
  Journal of Operational Research 292~(2), 770--783.

\bibitem[{Schmidli(2008)}]{Sch08}
Schmidli, H., 2008. Stochastic Control in Insurance. Probability and its
  Applications. Springer, Berlin, Heidelberg, New York.

\bibitem[{Steffensen(2006)}]{Ste06}
Steffensen, M., 2006. Quadratic optimization of life and pension insurance
  payments. {ASTIN} Bulletin 36~(1), 245--267.

\bibitem[{Tan et~al.(2020)Tan, Wei, Wei, and Zhuang}]{TaWeWeZh20}
Tan, K.~S., Wei, P., Wei, W., Zhuang, S.~C., 2020. Optimal dynamic reinsurance
  policies under a generalized denneberg's absolute deviation principle.
  European Journal of Operational Research 282~(1), 345--362.

\bibitem[{Zhang et~al.(2016)Zhang, Meng, and Zeng}]{ZhMeZe16}
Zhang, X., Meng, H., Zeng, Y., 2016. Optimal investment and reinsurance
  strategies for insurers with generalized mean--variance premium principle and
  no-short selling. Insurance: Mathematics and Economics 67, 125--132.

\end{thebibliography}

\newpage

\appendix



\section{Optimal designs}

\subsection{Proof of Theorem \ref{thm.optimal.C}}
\label{app.proof.C}

\begin{proof}
Consider the optimisation problem of maximizing the function $f$ over $B$ defined by
\begin{equation}
    f(B;\omega):=-\frac{1}{2}(k-B)^2-\lambda^*_C[Z_T(\omega)B-x].
\end{equation}
with support $B\in[C,k]$.
This is a quadratic function with an axis of symmetry at the point $k-\lambda^*_CZ_T(\omega)$. Since this is smaller than $k$, we immediately conclude that the maximum must be obtained either at the point $k-\lambda^*_CZ_T(\omega)$ or, in case this point is smaller than $C$, then at $C$. Therefore we can write that
$$B^*(\omega):=\max(k-\lambda^*_CZ_T(\omega),C)$$
maximises $f(\cdot,\omega)$ on $[C,k]$. In other words, we have that
$$f(X_T^{\pi_C^*}(\omega),\omega)\geq f(X_T^\pi(\omega),\omega)$$
for any $\pi\in\Pi$.
Now, taking expectation and recalling that $\lambda^*_C$ is chosen such that $\mathbb{E}[Z_TX_T^{\pi_{\lambda^*_C}^C}]=x$, we see that
\begin{equation}
    \mathbb{E}[-\frac{1}{2}(k-X_T^{\pi^*_C})^2]=\mathbb{E}[-\frac{1}{2}(k-X_T^{\pi_{\lambda^*_C}^C})^2]\geq \mathbb{E}[-\frac{1}{2}(k-X_T^\pi)^2],
\end{equation}
which is our desired result.
\end{proof}

\subsection{Proof of Theorem \ref{thm.optimal.P}}
\label{app.proof.P}

\begin{proof}

Denote
\begin{equation}
    \gamma^*:=\frac{(C-c)^2}{2}>0
\end{equation}
and consider the optimisation problem of maximising
\begin{equation}
    f(B,\omega):=-\frac{1}{2}(k-B)^2-{\lambda^*_P}[Z_T(\omega)B-x]+\gamma^*[1(B\geq C)-(1-\varepsilon)].
\end{equation}

In the following, we verify that
\begin{equation}\label{Bstar.P}
    B^*(\omega)=\begin{cases}
    k-{\lambda^*_P} Z_T(\omega),\quad &\text{if }k-{\lambda^*_P}Z_T(\omega)\geq C\\
    C,\quad &\text{if }k-{\lambda^*_P}Z_T(\omega)\in[c^*,C)\\
    k-{\lambda^*_P} Z_T(\omega),\quad &\text{if }k-{\lambda^*_P}Z_T(\omega)<c^*
\end{cases},
\end{equation}
maximises $f(\cdot,\omega)$. We first split the entire range $(-\infty,k]$ into $(-\infty,C)$ and $[C,k]$.

On the interval $[C,k]$, it is clear that
$$f(B,\omega)\leq f(\max(k-\lambda^*_PZ_T(\omega),C),\omega).$$
Similarly, on the interval $(-\infty,C]$, we have
$$f(B,\omega)\leq \max(f(\min(k-\lambda^*_PZ_T(\omega),C),\omega),f(C,\omega)).$$
In other words, we wish to show that
\begin{equation}\label{compare.P}
    f(B^*(\omega),\omega)\geq \max(f(\min(k-\lambda^*_PZ_T(\omega),C),\omega),f(\max(C,k-\lambda^*_PZ_T(\omega),\omega)).
\end{equation}
It is sufficient to show the above for all three cases described in \eqref{Bstar.P}.

Case 1: $k-{\lambda^*_P}Z_T(\omega)\geq C$. In this case, it is easy to see that the right-hand side of \eqref{compare.P} is the same as the left-hand side.

Case 2: $k-{\lambda^*_P}Z_T(\omega)\in[c^*,C]$. In this case, the right-hand side of \eqref{compare.P} is
$$\max(f(k-{\lambda^*_P}Z_T(\omega),\omega),f(C,\omega)).$$By taking the difference between the two arguments, we see
\begin{align*}
    &f(k-{\lambda^*_P}Z_T(\omega),\omega)-f(C,\omega)
    =\frac{1}{2}(\lambda Z_T(\omega)-k+C)^2-\gamma^*>0\\
\iff~&C-(k-{\lambda^*_P} Z_T(\omega))>\sqrt{2\gamma^*}  
\iff k-{\lambda^*_P} Z_T(\omega)< C-\sqrt{2\gamma^*}=c^*.
\end{align*}
and therefore the right-hand side of \eqref{compare.P} is $f(C,\omega)$, the same as the left-hand side.

Case 3: $k-{\lambda^*_P}Z_T(\omega)\in(-\infty,c^*]$. In this case, the right hand side of \eqref{compare.P} is also $$\max(f(k-{\lambda^*_P}Z_T(\omega),\omega),f(C,\omega)).$$ However, via the same analysis as above, we see that this expression is now evaluated as $f(k-{\lambda^*_P}Z_T(\omega),\omega)=f(B^*(\omega),\omega)$.

This concludes our assertion that $B^*(\omega)$ is a maximiser of $f(\cdot,\omega)$. 
In other words, we have derived that
$$f(X_T^{\pi^*_P}(\omega),\omega)\geq f(X_T^\pi,\omega)$$
for any admissible design $\pi$. Integrating both sides and making use of the constraints, we see that
\begin{equation}
    \mathbb{E}[-\frac{1}{2}(k-X_T^{\pi^*_P})^2]\geq \mathbb{E}[-\frac{1}{2}(k-X_T^\pi)^2]+\gamma^*(\mathbb{P}[X_T^\pi\geq C]-(1-\varepsilon))\geq \mathbb{E}[-\frac{1}{2}(k-X_T^\pi)^2],
\end{equation}
which completes the proof.
\end{proof}

\subsection{Proof of Theorem \ref{thm.optimal.E}}
\label{app.proof.E}

\begin{proof}

Consider the optimisation problem of maximising
\begin{equation}
    f(B,\omega):=-\frac{1}{2}(k-B)^2-{\lambda^*_S}[Z_T(\omega)B-x]-\gamma^*_S[(C-B)1(C\geq B)-c].
\end{equation}

In the following, we verify that
\begin{equation}\label{Bstar.S}
    B^*(\omega)=\begin{cases}
    k-{\lambda^*_S} Z_T(\omega),\quad &\text{if }k-{\lambda^*_S}Z_T(\omega)\geq C,\\
    C,\quad &\text{if }k-{\lambda^*_S}Z_T(\omega)\in[C-\gamma^*_S,C),\\
    k-{\lambda^*_S} Z_T(\omega)+\gamma_S^*,\quad &\text{if }k-{\lambda^*_S}Z_T(\omega)<C-\gamma^*_S,
\end{cases}
\end{equation}
maximises $f(\cdot,\omega)$. We first split the entire range $(-\infty,k]$ into $(-\infty,C)$ and $[C,k]$.

On interval $[C,k]$, we have
$$f(B,\omega)\leq f(\max(k-\lambda^*_SZ_T(\omega),C),\omega).$$
Similarly, on interval $(-\infty,C]$, we have
$$f(B,\omega)\leq f(\min(k-\lambda^*_SZ_T(\omega)+\gamma^*_S,C),\omega).$$
In other words, we wish to show
\begin{equation}\label{compare.S}
    f(B^*(\omega),\omega)\geq \max(f(\min(k-\lambda^*_SZ_T(\omega)+\gamma^*_S,C),\omega),f(\max(C,k-\lambda^*_SZ_T(\omega),\omega)).
\end{equation}
It is sufficient to show the above for all three cases described in \eqref{Bstar.S}.

Case 1: $k-{\lambda^*_S}Z_T(\omega)\geq C$. In this case, it is easy to see that the right-hand side of \eqref{compare.S} is the same as the left-hand side.

Case 2: $k-{\lambda^*_S}Z_T(\omega)\in[C-\gamma^*_S,C]$. In this case, the right hand side of \eqref{compare.S}
$$\max(f(k-{\lambda^*_S}Z_T(\omega)+\gamma^*_S,\omega),f(C,\omega))$$
is $f(C,\omega)$ if and only if $k-{\lambda^*_S}Z_T(\omega)+\gamma^*_S>C$, which is equivalent to $k-{\lambda^*_S}Z_T(\omega)>C-\gamma^*_S$. Hence, we get \eqref{compare.S}.

Case 3: $k-{\lambda^*_S}Z_T(\omega)\in(-\infty,C-\gamma^*_S]$. Via the same analysis as above, we see that this expression is now evaluated as $f(k-{\lambda^*_S}Z_T(\omega)+\gamma^*_S,\omega)=f(B^*(\omega),\omega)$.

This concludes our assertion that $B^*(\omega)$ is a maximiser of $f(\cdot,\omega)$. 
In other words, we have derived
$$f(X_T^{\pi^*_S}(\omega),\omega)\geq f(X_T^\pi,\omega)$$
for any admissible design $\pi$. Integrating both sides and making use of the constraints, we see that
\begin{equation}
    \mathbb{E}[-\frac{1}{2}(k-X_T^{\pi^*_S})^2]\geq \mathbb{E}[-\frac{1}{2}(k-X_T^\pi)^2]-\gamma^*_S(\mathbb{E}[(C-X_T^\pi)_+]-c)\geq \mathbb{E}[-\frac{1}{2}(k-X_T^\pi)^2],
\end{equation}
which completes the proof.
\end{proof}

\subsection{Proof of Theorem \ref{thm.optimal.Q}}
\label{app.proof.Q}

\begin{proof}

Consider the optimisation problem of maximising
\begin{equation}
    f(B,\omega):=-\frac{1}{2}(k-B)^2-{\lambda^*_\mathbb{Q}}[Z_T(\omega)B-x]-(\lambda^*_\mathbb{Q}-\delta^*_\mathbb{Q})[Z_T(\omega)(C-B)1(C\geq B)-\nu].
\end{equation}

In the following, we verify 
\begin{equation}\label{Bstar.Q}
    B^*(\omega)=\begin{cases}
    k-{\lambda^*_\mathbb{Q}} Z_T(\omega),\quad &\text{if }Z_T(\omega)\leq \frac{k-C}{\lambda^*_\mathbb{Q}},\\
    C,\quad &\text{if }Z_T(\omega)\in[\frac{k-C}{\lambda^*_\mathbb{Q}},\frac{k-C}{\delta^*_\mathbb{Q}}],\\
    k-\delta^*_\mathbb{Q} Z_T(\omega),\quad &\text{if }Z_T(\omega)>\frac{k-C}{\delta^*_\mathbb{Q}},
\end{cases}
\end{equation}
maximises $f(\cdot,\omega)$. We first split the entire range $(-\infty,k]$ into $(-\infty,C)$ and $[C,k]$.

On interval $[C,k]$, we have
$$f(B,\omega)\leq f(\max(k-\lambda^*_\mathbb{Q} Z_T(\omega),C),\omega).$$
Similarly, on interval $(-\infty,C]$, we have
$$f(B,\omega)\leq f(\min(k-\delta^*_\mathbb{Q} Z_T(\omega),C),\omega).$$
In other words, we wish to show
\begin{equation}\label{compare.Q}
    f(B^*(\omega),\omega)\geq \max(f(\min(k-\delta^*_\mathbb{Q} Z_T(\omega),C),\omega),f(\max(C,k-\lambda^*_\mathbb{Q} Z_T(\omega),\omega)).
\end{equation}
It is sufficient to show the above for all three cases described in \eqref{Bstar.Q}.

Case 1: $Z_T(\omega)\leq ({k-C})/{\lambda^*_\mathbb{Q}}$. In this case, it is easy to see that the right-hand side of \eqref{compare.Q} is the same as the left-hand side.

Case 2: $Z_T(\omega)\in[({k-C})/{\lambda^*_\mathbb{Q}},({k-C})/\delta^*_\mathbb{Q}]$. In this case, the right-hand side of \eqref{compare.Q}
$$\max(f(k-\delta^*_\mathbb{Q} Z_T(\omega),\omega),f(C,\omega))$$
is $f(C,\omega)$ if and only if $k-\delta^*_\mathbb{Q} Z_T(\omega)>C$, which is equivalent to the condition defining Case 2. Hence, we get \eqref{compare.Q}.

Case 3: $Z_T(\omega)\geq ({k-C})/\delta^*_\mathbb{Q}$. Via the same analysis as above, we see that this expression is now evaluated as $f(k-\delta^*_\mathbb{Q} Z_T(\omega),\omega)=f(B^*(\omega),\omega)$.

This concludes our assertion that $B^*(\omega)$ is a maximiser of $f(\cdot,\omega)$. 
In other words, we have derived
$$f(X_T^{\pi^*_\mathbb{Q}}(\omega),\omega)\geq f(X_T^\pi,\omega)$$
for any admissible design $\pi$. Integrating both sides and making use of the constraints, we see that
\begin{equation}
    \mathbb{E}[-\frac{1}{2}(k-X_T^{\pi^*_\mathbb{Q}})^2]\geq \mathbb{E}[-\frac{1}{2}(k-X_T^\pi)^2]-(\lambda^*_\mathbb{Q}-\delta^*_\mathbb{Q})(\mathbb{E}[Z_T(C-X_T^\pi)_+]-\nu)\geq \mathbb{E}[-\frac{1}{2}(k-X_T^\pi)^2],
\end{equation}
which completes the proof.
\end{proof}

\section{Auxiliary results}

\subsection{Computation of Expectations}
Note 
$$Z_T\sim e^{-\frac{1}{2}\beta^2 T+(-\beta\sqrt{T})Z},$$
where $Z$ is a $N(0,1)$ random variable,
we have for $b>a>0$
\begin{equation}\label{app.help}
\begin{cases}
    &\mathbb{E}[Z_T1(Z_T\in[a,b])]=\Phi(\frac{\frac{1}{2}\beta^2T-\log b}{\beta \sqrt{T}})-\Phi(\frac{\frac{1}{2}\beta^2T-\log a}{\beta \sqrt{T}})\\
    &\mathbb{E}[Z_T^21(Z_T\in[a,b])]=e^{\beta^2T}\Big(\Phi(\frac{\frac{3}{2}\beta^2T-\log b}{\beta \sqrt{T}})-\Phi(\frac{\frac{3}{2}\beta^2T-\log a}{\beta \sqrt{T}})\Big)
\end{cases}.
\end{equation}

Note that at time $t$, we have to update those formulas with $\mathcal{F}_t$. In this case, $Z_t$ is known, and the increment $Z_T/Z_t$ has the same distribution as an increment $Z_{T-t}$. However, $a$ and $b$ in \eqref{app.help} need to be \emph{updated},
\begin{equation} \label{E_update}
Z_T\in[a,b] \equiv Z_t \times \left\{ \frac{Z_T}{Z_t}\in\left[\frac{a}{Z_t},\frac{b}{Z_t}\right]\right\}.
\end{equation}
The quantity in curly brackets in \eqref{E_update} can then be calculated using \eqref{app.help} and the updated boundaries $[a/Z_t,b/Z_t]$. This \emph{update} is referred to in other remarks below.

\subsection{Computation of Proportions}\label{ss.comp.pit}

We compute the general form of $\pi_t$ in this section. Denote
\begin{equation}\label{app.aux2}
\begin{cases}
    &f(t,z) = \Phi(\frac{\frac{1}{2}\beta^2t-\log z}{\beta \sqrt{t}})\\
    &g(t,z) = e^{\beta^2t}\Phi(\frac{\frac{3}{2}\beta^2t-\log z}{\beta \sqrt{t}})\\
    &h(t,z) = \frac{g(t,z)}{z}\\
    &m(t,z) = ze^{\beta^2t}
\end{cases},
\end{equation}
we see later in Section \ref{A_paths} that $X^\pi_t$ is a linear combination of $1$, $f(T-t,a_j/Z_t)$, $h(T-t,a_j/Z_t)$ and $m(T-t,Z_t)$. Our goal is to derive the Stochastic Differential Equation for $X^\pi_t$. Once this is done, $\pi_t$ can be obtained by matching coefficients. It suffices to investigate only $f$, $h$ and $m$, and there is nothing to do for the constant. In the following, we look into the terms individually.

We start with $f$ and $h$. First, observe the following PDEs are satisfied:
\begin{equation}\label{app.aux2.pde}
\begin{cases}
    &f_t = \frac{1}{2}\beta^2 z^2 f_{zz}\\
    &g_t = \beta^2 g + \frac{1}{2} \beta^2 z^2 g_{zz} -\beta^2 z g_z
\end{cases},
\end{equation}
which implies
\begin{equation}
    h_t = \frac{1}{2}\beta^2 z^2 h_{zz}
\end{equation}
which has a similar form as $f$ in \eqref{app.aux2.pde}.

Therefore, in the following, we consider $f$ and denote 
\begin{equation}
    V(t,z) = f(T-t,\frac{\alpha}{z}),    
\end{equation}
where $\alpha >0$ is a constant. Consequently, we have
\begin{equation}
    V_t = -f_t,\quad V_z = f_z\frac{-\alpha}{z^2},\quad 
    V_{zz} = f_{zz}\frac{\alpha^2}{z^4} + f_z\frac{2\alpha}{z^3},
\end{equation}
where the argument inside $f$, $(T-t,\alpha/z)$, is omitted for clarity of presentation and hence
$$
    V_t + \frac{\beta^2z^2}{2}V_{zz} = \Big(-f_t + \frac{1}{2}\beta^2(\frac{\alpha}{z})^2f_{zz}\Big) + \beta^2(\frac{\alpha}{z})f_z = \beta^2(\frac{\alpha}{z})f_z,
$$
where the terms inside the bracket are zero by the property of $f$, see \eqref{app.aux2.pde}.

From this, we see that via \rev{It\=o's Lemma}
\begin{align*}
    dV(t,Z_t) =~& V_tdt + V_zdZ_t + \frac{1}{2}V_{zz}(dZ_t)^2\\
    =~& V_t dt + \beta Z_t V_z dW_t + \frac{1}{2}\beta^2Z_t^2V_{zz}dt\\
    =~& \beta^2(\frac{\alpha}{Z_t})f_z dt - \beta\frac{\alpha}{Z_t}f_zdW_t\\
    =~& \frac{-\beta}{\sigma}\frac{\alpha}{Z_t}f_z(T-t,\frac{\alpha}{Z_t})
    \Big(
        bdt + \sigma dW_t    
    \Big),\label{abc}
\end{align*}
where we have used the definition of $\beta$, i.e. $\beta = -b/\sigma$. 

The last form of $V$ is 
\begin{equation}
    V(t,z) = m(T-t,z) = z e^{\beta^2 (T-t)}.
\end{equation}
Direct computation gives
\begin{equation}
    V_t = -\beta^2 V, \quad V_z = V/z, \quad V_{zz} = 0
\end{equation}
and therefore, via \rev{It\=o's Lemma}, we have
\begin{align*}
    dV(t,Z_t) = ~&
    -\beta^2V(t,Z_t)dt + \beta Z_t V(t,Z_t)/Z_tdW_t\\
    =~& \frac{\beta m(T-t,Z_t)}{\sigma} (b dt + \sigma dW_t).
\end{align*}

Therefore, if we write
\begin{equation}\label{xpit.linear.comb}
    X^\pi_t = V(t,Z_t) = \sum_j \alpha_j f_j(T-t,\frac{a_j}{Z_t})
            - \xi m(T-t,Z_t),
\end{equation}
where $\xi$, $\alpha_j$, $a_j$ are constants and each $f_j$ is either $f$ or $h$ (or constant), see Section \ref{A_paths} for the exact linear combination for each case. Then via \rev{It\=o's Lemma}, we have
$$
dX^\pi_t = \frac{-\beta}{\sigma}
        \Big(
            \sum_j\alpha_j\frac{a_j}{Z_t}\parD{z}f_j\Big\rvert_{(T-t,\frac{a_j}{Z_t})}
            +\xi m(T-t,Z_t)
        \Big)
                        \Big(
                            bdt + \sigma dW_t    
                        \Big).
$$
Recall
$$
dX^{\pi}_t = (1-\pi_t)(bdt+\sigma dW_t),
$$
we can conclude that 
\begin{equation}\label{sol.pit}
    \pi_t = 1+ \frac{\beta}{\sigma}\Big(
    \sum_j\alpha_j\frac{a_j}{Z_t}\parD{z}f_j\Big\rvert_{(T-t,\frac{a_j}{Z_t})}
    +\xi m(T-t,Z_t)
    \Big).
\end{equation}
\begin{remark}
    Strictly speaking, one can match the $dW_t$ term, so the complete calculation of $dV(t,Z_t)$ is unnecessary.
\end{remark}

For computation, the following are required:
\begin{equation}
    \begin{cases}
        &zf_z(t,z) = \frac{-1}{\beta\sqrt{t}} \phi(\frac{\frac{1}{2}\beta^2t-\log z}{\beta\sqrt{t}})\\
        &z h_z(t,z) = \frac{e^{\beta^2t}}{z}\Big(
            \frac{-1}{\beta\sqrt{t}} \phi(\frac{\frac{3}{2}\beta^2t-\log z}{\beta\sqrt{t}})
            - \Phi(\frac{\frac{3}{2}\beta^2t-\log z}{\beta\sqrt{t}})
        \Big)
    \end{cases}
\end{equation}
where 
$$
\phi: x \mapsto \frac{1}{\sqrt{2\pi}}e^{-\frac{1}{2}x^2}
$$
is the Normal density.

\section{Paths} \label{A_paths}
Calculation of a sample path $X_t^\pi$ at time $t$ for a design $\pi$ conditioning on the filtration $\mathscr{F}_t$ follows directly from \eqref{find.sample.path} where individual expectation can be calculated using \eqref{app.help}. This also computes the optimal design $\pi^*$ thanks to \eqref{xpit.linear.comb} and \eqref{sol.pit}.

\subsection{Unconstrained problem}
For a design $\pi\in \Pi_U$ with parameter $\lambda>0$, its path at time $t\in[0,T]$ is given by 
\begin{align*}
    X^\pi_t = ~& \mathbb{E}[\frac{Z_T}{Z_t}(k-\lambda Z_T)|\mathscr{F}_t]\\
        =~& \mathbb{E}[\frac{Z_T}{Z_t}(k-\lambda Z_t\frac{Z_T}{Z_t})|\mathscr{F}_t]\\
        =~& \mathbb{E}[\widetilde{Z}_{T-t}(k-\lambda {Z}_t \widetilde{Z}_{T-t})|{Z}_t]\\
        =~& k-\lambda Z_t \mathbb{E}[\widetilde{Z}_{T-t}^2]\\
        =~& k-\lambda Z_t e^{\beta^2(T-t)},
\end{align*}
where $\widetilde{Z}_{T-t}$ is an independent copy of $Z_{T-t}$. Note the ``$=$" notation above indicates equality in distribution, and the expectation term in the last line can be evaluated easily, e.g. with the help of \eqref{app.help}.

To compute the proportion $\pi_t$, we rewrite the above as
$$
X^\pi_t = k - \lambda m(T-t,Z_t).
$$

\subsection{Strict constraint}\label{ss.fs}
Similar to above, for a design $\pi\in \Pi_C$ with parameter $\lambda>0$, denote
$$
    \eta = \frac{k-C}{\lambda},
$$
the path of $X^\pi$ at time $t\in[0,T]$ is given by 
\begin{align*}
    X^\pi_t = ~& \mathbb{E}[\frac{Z_T}{Z_t}(k-\frac{k-C}{\eta} Z_T)1(Z_T<\eta)|\mathscr{F}_t] + \mathbb{E}[\frac{Z_T}{Z_t}C1(Z_T>\eta)|\mathscr{F}_t]\\
        =~& \mathbb{E}[\frac{Z_T}{Z_t}(k-\frac{k-C}{\eta} Z_t \frac{Z_T}{Z_t})1(\frac{Z_T}{Z_t}<\frac{\eta}{Z_t})|\mathscr{F}_t] + \mathbb{E}[\frac{Z_T}{Z_t}C1(\frac{Z_T}{Z_t}>\frac{\eta}{Z_t})|\mathscr{F}_t]\\
        =~& \mathbb{E}[\widetilde{Z}_{T-t}(k-\frac{k-C}{\eta/Z_t} \widetilde{Z}_{T-t})1(\widetilde{Z}_{T-t}<\frac{\eta}{Z_t})|Z_t] + \mathbb{E}[\widetilde{Z}_{T-t}C 1(\widetilde{Z}_{T-t}>\frac{\eta}{Z_t})|Z_t]\\
        =~&
            k\mathbb{E}[\widetilde{Z}_{T-t};\widetilde{Z}_{T-t}<\frac{\eta}{Z_t}|Z_t]
            - \frac{k-C}{\eta/Z_t} \mathbb{E}[\widetilde{Z}_{T-t}^2;\widetilde{Z}_{T-t}<\frac{\eta}{Z_t}|Z_t]
          + C \mathbb{E}[\widetilde{Z}_{T-t};\widetilde{Z}_{T-t}>\frac{\eta}{Z_t}|Z_t],
\end{align*}
where we have used a shorthand notation
$$
    \mathbb{E}[\cdot;A] = \mathbb{E}[\cdot 1(A)].
$$
Again, treating $Z_t$ as a constant and applying \eqref{app.help}, the above expression can be evaluated with ease.
\begin{remark}
    Recall by definition $\eta = (k-C)/\lambda$, the update of the $\eta$ to $\eta/Z_t$ amounts to update $\lambda$ to $\lambda Z_t$.
\end{remark}

To compute the proportion $\pi_t$, we rewrite the above as 
\begin{align*}
    X^\pi_t &=
            k\mathbb{E}[\widetilde{Z}_{T-t};\widetilde{Z}_{T-t}<\frac{\eta}{Z_t}|Z_t]
            - \frac{k-C}{\eta/Z_t} \mathbb{E}[\widetilde{Z}_{T-t}^2;\widetilde{Z}_{T-t}<\frac{\eta}{Z_t}|Z_t]
          + C \mathbb{E}[\widetilde{Z}_{T-t};\widetilde{Z}_{T-t}>\frac{\eta}{Z_t}|Z_t] \\
          &= k f(T-t, \frac{\eta}{Z_t}) - (k-C) h(T-t, \frac{\eta}{Z_t}) + C(1 - f(T-t, \frac{\eta}{Z_t}))
\end{align*}

\subsection{Probability (VaR) constraint}
Similar to above, for a design $\pi\in \Pi_P$ with parameters $0<g_1\leq g_2$, its path at time $t\in[0,T]$ is given by 
\begin{align*}
    X^\pi_t = ~& \mathbb{E}[\frac{Z_T}{Z_t}(k-\frac{k-C}{g_1} Z_T)1(Z_T\notin[g_1,g_2])|\mathscr{F}_t] + 
                  \mathbb{E}[\frac{Z_T}{Z_t}C1(Z_T\in[g_1,g_2])|\mathscr{F}_t]\\
            =~& \mathbb{E}[(\frac{Z_T}{Z_t}k - \frac{k-C}{g_1/Z_t}(\frac{Z_T}{Z_t})^2)1(\frac{Z_T}{Z_t}\notin[\frac{g_1}{Z_t},\frac{g_2}{Z_t}])|\mathscr{F}_t] + \mathbb{E}[\frac{Z_T}{Z_t}C1(\frac{Z_T}{Z_t}\in[\frac{g_1}{Z_t},\frac{g_2}{Z_t}])|\mathscr{F}_t]\\
            =~&
                k \mathbb{E}[\widetilde{Z}_{T-t};\widetilde{Z}_{T-t}\notin[\frac{g_1}{Z_t},\frac{g_2}{Z_t}]|Z_t]-
                \frac{k-C}{g_1/Z_t}\mathbb{E}[\widetilde{Z}_{T-t}^2;\widetilde{Z}_{T-t}\notin[\frac{g_1}{Z_t},\frac{g_2}{Z_t}]|Z_t]\\
            &    + C \mathbb{E}[\widetilde{Z}_{T-t};\widetilde{Z}_{T-t}\in[\frac{g_1}{Z_t},\frac{g_2}{Z_t}]|Z_t].
\end{align*}
\begin{remark}
    Recall by definition $g_1 = (k-C)/\lambda$ and $g_2 = (k-c)/\lambda$, the update of $g_1$ to $g_1/Z_t$ and $g_2$ to $g_2/Z_t$ amounts to update $\lambda$ to $\lambda Z_t$.
\end{remark}

To compute the proportion $\pi_t$, we rewrite the above as
\begin{align*}
    X^\pi_t = ~&
    k (1 - f(T-t, \frac{g_2}{Z_t}) + f(T-t, \frac{g_1}{Z_t}))\\
    & +(k-C) ( \frac{g_2}{g_1}h(T-t, \frac{g_2}{Z_t}) - h(T-t, \frac{g_1}{Z_t})) - \frac{k-C}{g_1}m(T-t,Z_t) \\
    &+ C (f(T-t, \frac{g_2}{Z_t}) - f(T-t, \frac{g_1}{Z_t}))
\end{align*}

\subsection{Expected shortfall constraint under $\mathbb{P}$}
Similar to above, for a design $\pi\in \Pi_E$ with parameters $0<h_1\leq h_2$ (and therefore $\gamma = (h_2-h_1)(k-C)/h_1$), its path at time $t\in[0,T]$ is given by 
\begin{align*}
    X^\pi_t = ~& \mathbb{E}[\frac{Z_T}{Z_t}(k-\frac{k-C}{h_1} Z_T)1(Z_T\notin[h_1,h_2])|\mathscr{F}_t] + 
                  \mathbb{E}[\frac{Z_T}{Z_t}C1(Z_T\in[h_1,h_2])|\mathscr{F}_t]\\
                & + \gamma \mathbb{E}[\frac{Z_T}{Z_t}1(Z_T>h_2)|\mathscr{F}_t]\\
            =~& \mathbb{E}[\frac{Z_T}{Z_t}(k-\frac{k-C}{h_1/Z_t} \frac{Z_T}{Z_t})1(\frac{Z_T}{Z_t}\notin[\frac{h_1}{Z_t},\frac{h_2}{Z_t}])|\mathscr{F}_t] + 
                  \mathbb{E}[\frac{Z_T}{Z_t}C1(\frac{Z_T}{Z_t}\in[\frac{h_1}{Z_t},\frac{h_2}{Z_t}])|\mathscr{F}_t]\\
                & + \gamma \mathbb{E}[\frac{Z_T}{Z_t}1(\frac{Z_T}{Z_t}>\frac{h_2}{Z_t})|\mathscr{F}_t]\\
           =~&
                k \mathbb{E}[\widetilde{Z}_{T-t};\widetilde{Z}_{T-t}\notin[\frac{h_1}{Z_t},\frac{h_2}{Z_t}]|Z_t]-
                \frac{k-C}{h_1/Z_t}\mathbb{E}[\widetilde{Z}_{T-t}^2;\widetilde{Z}_{T-t}\notin[\frac{h_1}{Z_t},\frac{h_2}{Z_t}]|Z_t]\\
            &    + C \mathbb{E}[\widetilde{Z}_{T-t};\widetilde{Z}_{T-t}\in[\frac{h_1}{Z_t},\frac{h_2}{Z_t}]|Z_t] 
                + \gamma \mathbb{E}[\widetilde{Z}_{T-t};\widetilde{Z}_{T-t}>\frac{h_2}{Z_t}|Z_t].
\end{align*}
\begin{remark}
    Similar to above, the update of $h_1$ to $h_1/Z_t$ and $h_2$ to $h_2/Z_t$ amounts to update $\lambda$ to $\lambda Z_t$.
\end{remark}

To compute the proportion $\pi_t$, we rewrite the above as 
\begin{align*}
    X^\pi_t =~&
            k (1 - f(T-t, \frac{h_2}{Z_t}) + f(T-t, \frac{h_1}{Z_t})) \\
            &+ (k-C) ( \frac{h_2}{h_1} h(T-t, \frac{h_2}{Z_t})
            - h(T-t, \frac{h_1}{Z_t})) - \frac{k-C}{h_1}m(T-t, Z_t) \\
            &   + C ( f(T-t, \frac{h_2}{Z_t}) - f(T-t, \frac{h_1}{Z_t})) \\
            &+ \gamma (1 - f(T-t, \frac{h_2}{Z_t}))
\end{align*}

\subsection{Expected shortfall constraint under $\mathbb{Q}$}\label{ss.qs}
Similar to above, for a design $\pi\in \Pi_\mathbb{Q}$ with parameters $0<\delta\leq \lambda$, its path at time $t\in[0,T]$ is given by 
\begin{align*}
    X^\pi_t = ~& \mathbb{E}[\frac{Z_T}{Z_t}(k-\lambda Z_T)1(Z_T<\frac{k-C}{\lambda})|\mathscr{F}_t] + 
                  \mathbb{E}[\frac{Z_T}{Z_t}C1(Z_T\in[\frac{k-C}{\lambda},\frac{k-C}{\delta}])|\mathscr{F}_t]\\
                & + \mathbb{E}[\frac{Z_T}{Z_t}(k-\delta Z_T)1(Z_T>\frac{k-C}{\delta})|\mathscr{F}_t]\\
            =~& \mathbb{E}[\frac{Z_T}{Z_t}(k-\lambda Z_t \frac{Z_T}{Z_t})1(\frac{Z_T}{Z_t}<\frac{k-C}{\lambda Z_t})|\mathscr{F}_t] + 
                  \mathbb{E}[\frac{Z_T}{Z_t}C1(\frac{Z_T}{Z_t}\in[\frac{k-C}{\lambda Z_t},\frac{k-C}{\delta Z_t}])|\mathscr{F}_t]\\
                & + \mathbb{E}[\frac{Z_T}{Z_t}(k-\delta Z_t \frac{Z_T}{Z_t})1(\frac{Z_T}{Z_t}>\frac{k-C}{\delta Z_t})|\mathscr{F}_t]\\
            =~& k \mathbb{E}[\widetilde{Z}_{T-t};\widetilde{Z}_{T-t}<\frac{k-C}{\lambda Z_t}|Z_t]-
                \lambda Z_t\mathbb{E}[\widetilde{Z}_{T-t}^2;\widetilde{Z}_{T-t}<\frac{k-C}{\lambda Z_t}|Z_t] \\
                & + C \mathbb{E}[\widetilde{Z}_{T-t};\widetilde{Z}_{T-t}\in[\frac{k-C}{\lambda Z_t},\frac{k-C}{\delta Z_t}]|Z_t] \\
                & + k \mathbb{E}[\widetilde{Z}_{T-t};\widetilde{Z}_{T-t}>\frac{k-C}{\delta Z_t}|Z_t]-
                \delta Z_t\mathbb{E}[\widetilde{Z}_{T-t}^2;\widetilde{Z}_{T-t}>\frac{k-C}{\delta Z_t}|Z_t].
\end{align*}

\begin{remark}
    The update amounts to update $\lambda$ to $\lambda Z_t$ and $\delta$ to $\delta Z_t$.
\end{remark}

To compute the proportion $\pi_t$, we rewrite the above as 
\begin{align*}
    X^\pi_t 
    =~& k \mathbb{E}[\widetilde{Z}_{T-t};\widetilde{Z}_{T-t}<\frac{k-C}{\lambda Z_t}|Z_t]
     + k \mathbb{E}[\widetilde{Z}_{T-t};\widetilde{Z}_{T-t}>\frac{k-C}{\delta Z_t}|Z_t]\\
     & + C \mathbb{E}[\widetilde{Z}_{T-t};\widetilde{Z}_{T-t}\in[\frac{k-C}{\lambda Z_t},\frac{k-C}{\delta Z_t}]|Z_t] \\
      &- (k-C)\frac{\lambda}{k-C} Z_t\mathbb{E}[\widetilde{Z}_{T-t}^2;\widetilde{Z}_{T-t}<\frac{k-C}{\lambda Z_t}|Z_t] \\
      &-(k-C)\frac{\delta}{k-C} Z_t\mathbb{E}[\widetilde{Z}_{T-t}^2;\widetilde{Z}_{T-t}>\frac{k-C}{\delta Z_t}|Z_t]\\
    =~& k ( f(T-t, \frac{k-C}{\lambda Z_t}) + 1-
        f(T-t,\frac{k-C}{\delta Z_t}|Z_t]))\\
     & + C (f(T-t,\frac{k-C}{\delta Z_t}|Z_t]) - f(T-t,\frac{k-C}{\lambda Z_t}|Z_t]))\\
      &- (k-C)h(T-t,\frac{k-C}{\lambda Z_t}) \\
      &+(k-C)h(T-t, \frac{k-C}{\delta Z_t}) - \delta m(T-t,Z_t)
\end{align*}

\end{document}